\newtheorem{theorem}{Theorem}
\newtheorem{corollary}[theorem]{Corollary}
\newtheorem{definition}[theorem]{Definition}
\newtheorem{example}[theorem]{Example}
\newtheorem{problem}[theorem]{Problem}
\newtheorem{proposition}[theorem]{Proposition}
\newtheorem{remark}[theorem]{Remark}
\def\qed{\vbox{\hrule
 \hbox{\vrule\hbox to 5pt{\vbox to 8pt{\vfil}\hfil}\vrule}\hrule}}
\begin{document}

\journal{xxxxxxx}
\begin{frontmatter}

\title{On the spectra of $g$-circulant matrices and applications}
\title{On the spectra of $g$-circulant matrices and applications}

\author{Enide Andrade}
\ead{enide@ua.pt}
\address{CIDMA-Center for Research and Development in Mathematics and Applications
         Department of Mathematics, University of Aveiro, 3810-193, Aveiro, Portugal.}

\author{Luis Arrieta}
\address{Departamento de Matem\'{a}ticas, Universidad Cat\'{o}lica del Norte, Av. Angamos 0610 Antofagasta, Chile.}
\ead{luis.arrieta01@ucn.cl}

\author[]{Mar\'{\i}a Robbiano \corref{cor1}}
\cortext[cor1]{Corresponding author}
\address{Departamento de Matem\'{a}ticas, Universidad Cat\'{o}lica del Norte, Av. Angamos 0610 Antofagasta, Chile.}
\ead{mrobbiano@ucn.cl}

\begin{abstract} A $g$-circulant matrix of order $n$ is defined as a matrix of order $n$ where each row is a right cyclic shift in $g-$places to the preceding row. Using number theory, certain nonnegative $g$-circulant real matrices are constructed. In particular, it is shown that spectra with sufficient conditions so that it can be the spectrum of a real $g$-circulant matrix is not a spectrum with sufficient conditions so that it can be the spectrum of a real circulant matrix of the same order. The obtained results are applied to Nonnegative Inverse Eigenvalue Problem to construct nonnegative, $g$-circulant matrices with given appropriated spectrum. Moreover, nonnegative $g$-circulant matrices by blocks are also studied and in this case, their orders can be a multiple of a prime number.

\end{abstract}

\begin{keyword} Nonnegative inverse eigenvalue problem \sep nonnegative matrix \sep circulant matrix \sep $g$-circulant matrix \sep block $g$-circulant matrix

\MSC 05C50 \sep 05C05 \sep 15A18

\end{keyword}

\end{frontmatter}
\section{introduction}

\noindent A \textit{permutative} matrix is a square matrix where each row is a permutation of its first row. A \textit{circulant} matrix is a matrix where each row its a right cyclic shift in one place of the previous one. A \textit{$g-$circulant} matrix is a matrix where each row its a right cyclic shift in $g-$places of the previous row. Circulant matrices and $g$-circulant matrices are permutative.
An $n$-tuple of complex numbers,
\begin{equation}
\Sigma =\left( \lambda _{1},\lambda _{2},\ldots ,\lambda _{n}\right)
\label{list}
\end{equation}
is said to be \textit{realizable} by a nonnegative matrix $A$ of order $n$ if its components are the eigenvalues of $A.$ The \textit{Nonnegative Inverse Eigenvalue Problem} (NIEP) is a problem to determine necessary and sufficient conditions for a list of $n$ complex numbers to be realizable  by a nonnegative square matrix $A$ of order $n$.
If the list $\Sigma$ is realizable by a nonnegative matrix $A$, then we say that $A$ realizes $\Sigma$ or it is a \textit{realizing} matrix for $\Sigma$. Some results can be seen in \cite{Laffey1,Laffey2,Laffey-Smigoc}. This very difficult problem attracted the attention of many authors over the last $50$ years, and it was firstly considered by Sule\u{\i}manova in $1949$ (see \cite{Suleimanova}). Some partial results were obtained but it is still an open problem for $n \geq 5$.
In \cite{Loewy} the problem was solved for $n=3$ and for matrices of order $n=4$ the solution can be found in \cite{Mayo} and \cite{Meehan}.
In its general form it was studied in \cite{Boyle2,Guo3, Laffey,Laffey5,Laffey7,Loewy}. When the non-negative realizing matrix $A$ is required to be symmetric  the problem is designated \textit{Symmetric Nonnegative Inverse Eigenvalue Problem} (SNIEP) and it is also an open problem. It has also been a problem that had called much attention, see for instance, \cite{Fiedler,Laffey,LMc,Soules}. Another variant of this problem is to find lists of $n$ real numbers that can be lists of eigenvalues of nonnegative matrices of order $n$ and it is called the \textit{Real Nonnegative Inverse Eigenvalue Problem} (denoted by RNIEP).
Some results can be seen, for instance, in \cite{Laffey}. The structured NIEP is an analogous problem to NIEP with the difference that the matrix that realizes the list is structured. For instance, the matrix can be symmetric, Toeplitz,
Hankel, circulant, normal, permutative, etc., see  \cite{ AMRH, Fiedler,Laffey, LMc, MAR,PP} and the references cited therein.

\noindent In this paper we deal with structured matrices, in particular, permutative, circulant and $g$-circulant matrices. To study the spectrum with sufficient conditions in order that it can be the spectrum of a nonnegative permutative matrix allows us to answer the question if there exist sufficient conditions so that this spectrum can be realizable by a type of permutative matrix and at same time realizable by another type of permutative matrix.
The paper is organized as follows: In Section 2 we introduce some known results about $g-$circulant matrices and we prove some others; in Section 3 the spectrum of certain $g-$circulant matrices and the answer to the previous question is presented. It is also studied certain cases of $g-$circulant matrices that can be reconstructed knowing all its diagonal elements. Additionally, the case when the $g$-circulant matrix is reconstructed from its Perron root and its $n-1$ diagonal elements is also studied. In Section 4 sufficient conditions in order that a given $g-$circulant spectrum can be taken as the spectrum of a nonnegative $g-$circulant matrix are given. Nonnegative $g-$circulant matrices with given spectrum are obtained.
 Moreover, $g-$circulant matrices by blocks are defined and their spectrum are studied. Sufficient conditions in order that a given set can be taken as the spectrum of a certain class of $g-$circulant matrices by blocks are studied. Nonnegative $g$-circulant matrices by blocks with given spectrum are obtained.

\noindent The following notation will be used. A square nonnegative matrix $A$ is denoted by $A \geq 0$. $\Sigma(A)$ denotes the set of eigenvalues of a square matrix $A$. The transpose of $A$ is denoted by $A^{T}$. The trace of $A$ is $\rm{tr}(A).$ The set of invertible elements in the set of the residue classes congruence $(\rm{mod}\, n)$ will be denoted by  $U(\mathbb{Z} /n\mathbb{Z}).$ The set $S_n$ represents the symmetric group of order $n$. For integers $a,b$, $\rm{mdc}(a,b)$ denotes the greatest common divisor of $a$ and $b$. The identity matrix of order $n$ will be denoted by $I_{n}$ or just $I$ if its order can be deduced by the context.

\section{$g$-circulant matrices and some properties}
%%%%%%%%%%%%%%%%%%%%%%%%%%%%%%%%%%%%%%%
\noindent Circulant matrices are an important class of matrices and have many connections to  physics, probability and statistics, image processing, numerical analysis, number theory and geometry. Many properties of these matrices can be found in, for instance, \cite{Davies}. In terms of notation, these matrices are perfectly identified by its first row and we just write $C= C(\mathbf{c})=circ(c_{1},\ldots, c_{n}).$
 \begin{remark}
 \label{conju}
 If the vector $\mathbf{c}$ is real, and if $\mathbf{\lambda}=(\lambda_1,\lambda_2,\ldots,\lambda_n)$ is the vector of eigenvalues of $C,$ it is possible to see (\cite{rojo-soto}) that if $n$ is even then $\lambda_{1}$ is a real number and
$$\lambda_{n+1-k}=\overline{\lambda}_{k+1}, \quad 1 \leq k \leq \frac{n}{2}.$$ We note that, in this case, $\lambda_{\frac{n}{2}+1}$ is a real number.
 Analogously if $n$ is odd then,
 $$\lambda_{n+1-k}=\overline{\lambda}_{k+1}, \quad 1 \leq k \leq \frac{n-1}{2},$$ holds.
 \end{remark}

\noindent In this section a generalization of circulant matrices is recalled, namely $g$-circulant matrices. Some known properties of these interesting matrices are pointed out and others are shown.

\begin{definition} \cite{Davies}
A $g-$circulant matrix of order $n$ or simply $g$-circulant is a matrix in the following form:
\begin{eqnarray}
\label{mgcirc}
A &=&g-circ\left( a_{1},a_{2},\ldots ,a_{n}\right)  \\
&=&%
\begin{pmatrix}
a_{1} & a_{2} & a_{3} & \cdots  & a_{n} \\
a_{n-g+1} & a_{n-g+2} & a_{n-g+3} &  & a_{n-g} \\
a_{n-2g+1} & a_{n-2g+2} & a_{n-2g+3} &  & a_{n-2g} \\
\vdots  &  & \cdots  & \ddots  & \vdots  \\
a_{g+1} & a_{g+2} &  & \cdots  & a_{g}%
\end{pmatrix}.%
\end{eqnarray}
\end{definition}
Here the subscripts are taken $(\rm{mod} \, n)$, as for the circulant matrices.

\begin{remark} \cite{Davies}
\begin{itemize}
    \item If $0<g<n$, each row of $A$ is a right cyclic shift in $g$-places (or it is a left cyclic shift in $(n-g)$-places) to the preceding row.
    \item If $g >n$ a cyclic shift in $g$-places is the same cyclic shift in
    $g\, (\,\rm{ mod} \, n)$-places.
    \item By convention if $g$ is negative a right cyclic shift in $g$-places is equivalent to a left cyclic shift in $\left( -g\right) $-places. In consequence, for any integers $g$ and $g^{\prime},$ if $g \equiv g^{\prime } (\rm{mod} \, n)$ then a $g$-circulant and a $g^{\prime }$-circulant with the same first row are equal.
    \item If $g=n-1$, a $(-1)-$circulant matrix is obtained.
\end{itemize}
\end{remark}
Many examples of $g$-circulant matrices can be also found, (see \cite{Davies}).
The entries of a $g-$circulant matrix can also be specified. Let $A=\left( a_{ij}\right) _{1\leq i,j\leq n}$, from \cite{Davies}, $A$ is a $g-$circulant matrix if and only if
\begin{equation*}
a_{ij}=a_{i+1},_{j+g},\quad 1\leq i,j\leq n,
\end{equation*}
where the subscripts are taken (\rm{mod} n).

\noindent In an equivalent way, if $A=\left( a_{ij}\right) $ is a $g-circ\left(
a_{1},a_{2},\ldots ,a_{n}\right) $ then
\begin{equation*}
a_{ij}=a_{j-\left( i-1\right) g},\quad 1\leq i,j\leq n.
\end{equation*}

\noindent It results from the expression of the entries of a $g$-circulant matrix that if  $n$ is a prime number and $1<g< n$, a  $g$-circulant matrix of order $n$, $A$, is symmetric if and only if $g=n-1.$
In fact, if  $A=(a_{ij})=g-circ(a_1,\ldots, a_n)$ then $a_{ij}=a_{j-( i-1) g}, \ 1\leq i,j \leq n.$ Thus, $A$ is symmetric if and only if
$a_{j-( i-1)g} = a_{i-( j-1)g}, \  1\leq i,j \leq n,$ which is equivalent to $  j-( i-1) g \equiv i-( j-1) g \, (\rm{mod}\,n)$ and therefore $(j-i)(g+1) \equiv 0 \, (\rm{mod}\, n)$ which gives the equality $g\equiv n-1$.

\noindent Let $g>0.$ From  \cite{Davies}, the $g-$circulant matrices of order $n$ can be partitioned into two types, accordingly either $\rm{mdc}( n,g) =1$ or $\rm{mdc}( n,g) >1$. It is clear that all the rows of a $g-$circulant matrix are distinct if and only if $\rm{mdc}( n,g) =1$.
In this case, the rows of the $g$-circulant matrix can be permuted in such a way to re-obtain a classical circulant matrix. The same reasoning can be done for columns.

%\noindent Thus if $A$ is $g-$circulant and $\rm{mdc}(n,g) =1,$ for %appropriated permutation matrices, say  $P_{1}$ and $P_{2},$ the matrix %$A$ can be written as follows:
%\begin{eqnarray*}
%A &=&P_{1}C \\
%A &=&CP_{2}\text{,}
%\end{eqnarray*}
%\noindent where in both cases $C$ is an ordinary circulant matrix. In the %latter, $C$ has the same first row as $A$ and in the former, $C$ has the %same first column as $A$. In consequence, a $g$-circulant matrix of order %$n$ with $\rm{mdc}( n,g) =1$ is a circulant matrix followed by %renumbering. Moreover, the diagonalization of $g-$circulant matrices %presents substancial differences compared with the diagonalization of %circulant matrices.
%\noindent If $\rm{mdc}( n,g) >1$, then it is considered a degenerated case %and in consequence there are more complications.

%\begin{theorem} \cite{Davies}
%\label{conmg-cir}
%A matrix $A$ of order $n$ is $g-$circulant if and only if
%
%\begin{equation*}
%PA=AP^{g},
%\end{equation*}%
%where $P= circ (0,1,0, \ldots,0).$
%\end{theorem}

\noindent Here, if $\rm{mdc}(n,g) =1,$ the unique solution of the equation  $g x \equiv 1 \, ( \rm{mod} \,n)$ will be designated as $g^{-1}$, \cite{fraleigh}. Note that, from \cite{Davies}, if  $A$ is a non singular $g-$circulant matrix then $A^{-1}$ is a $g^{-1}-$circulant matrix.
\\
\noindent Let $Q_{g}=g-circ\left( 1,0,\ldots ,0\right) .$ Note that $Q_{g}$ is a permutation matrix and is unitary if and only if $\rm{mdc} ( n,g) =1.$ The next result and definition can be seen in  \cite{Davies}.

%\begin{theorem} \cite{Davies}
%If $A=g-circ\left( a_{1},a_{2},\ldots ,a_{n}\right) $, then
%\begin{equation}
%A=\sum_{k=1}^{n}a_{k}Q_{g}P^{k-1}. \label{repres}
%\end{equation}

%\end{theorem}

\begin{proposition}
\cite{Davies}
\label{qc}
The matrix $A$ is $g-$circulant if and only if $A$ is of the form $Q_{g}C$ where $C$ is a circulant matrix whose first row coincide with the first row of $A$.
\end{proposition}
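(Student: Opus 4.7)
The plan is to prove both implications by a direct matrix multiplication, exploiting the fact that $Q_g$ is a permutation matrix whose single $1$ in each row is placed at a predictable location. First I would determine the structure of $Q_g$ explicitly. Since $Q_g = g\text{-}circ(1,0,\ldots,0)$, applying the entry formula $a_{ij}=a_{j-(i-1)g}$ to the first row $(1,0,\ldots,0)$ shows that the $(i,j)$ entry of $Q_g$ equals $1$ precisely when $j\equiv 1+(i-1)g\pmod{n}$ and is $0$ otherwise. So row $i$ of $Q_g$ has its unique $1$ in column $k_i:=1+(i-1)g\pmod{n}$.

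Next I would compute $Q_gC$ for an arbitrary circulant $C=circ(c_1,\ldots,c_n)$, whose entries are $C_{kj}=c_{j-(k-1)}$ (subscripts mod $n$). Because row $i$ of $Q_g$ selects a single row of $C$, namely row $k_i$, we obtain
\begin{equation*}
(Q_gC)_{ij}=C_{k_i,j}=c_{j-(k_i-1)}=c_{j-(i-1)g}\pmod{n}.
\end{equation*}
This is exactly the entry formula that characterizes a $g$-circulant with first row $(c_1,\ldots,c_n)$. Note that the first row of $Q_gC$ (case $i=1$) is $(c_1,\ldots,c_n)$, which matches the first row of $C$.

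With this identity in hand, both directions of the equivalence are immediate. If $A=g\text{-}circ(a_1,\ldots,a_n)$, set $C:=circ(a_1,\ldots,a_n)$; the computation above gives $(Q_gC)_{ij}=a_{j-(i-1)g}=A_{ij}$, so $A=Q_gC$ with $C$ having the same first row as $A$. Conversely, if $A=Q_gC$ for some circulant $C$, then the same computation shows $A_{ij}=c_{j-(i-1)g}$, i.e.\ $A$ is $g$-circulant with first row equal to that of $C$.

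There is no real obstacle here; the whole argument hinges on identifying the position of the $1$'s in $Q_g$ and then chasing indices modulo $n$. The only point deserving care is the consistency of the index arithmetic, in particular verifying that the first row of $Q_gC$ indeed reproduces the first row of $C$ (so that the claim about first rows is part of the statement and not an extra hypothesis).
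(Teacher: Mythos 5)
Your argument is correct. Note that the paper itself gives no proof of this proposition: it is quoted from Davis's book on circulant matrices, so there is nothing in the text to compare against line by line. Your verification is exactly the natural one and is consistent with the conventions the paper does state: the entry formula $a_{ij}=a_{j-(i-1)g}$ (subscripts mod $n$), and the identification of the $1$'s of $Q_g$ at positions $j\equiv 1+(i-1)g \pmod{n}$, which is the same computation of $q_{ij}$ that appears inside the paper's proof of Proposition~\ref{espPD}. Your key identity $(Q_gC)_{ij}=c_{j-(i-1)g}$, obtained because row $i$ of $Q_g$ selects row $k_i=1+(i-1)g \pmod{n}$ of $C$ and $C_{k_i j}=c_{j-(k_i-1)}$, immediately yields both implications, and your remark that the case $i=1$ recovers the first row of $C$ correctly accounts for the clause about coinciding first rows. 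No gaps.
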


\begin{definition}
\cite{Davies}
A $P D-$matrix of order $n$ is a matrix of the form
\begin{equation}
    S=P_\nu D \label{pd}
\end{equation}
where $P_{\nu}$ is a permutation matrix associated to $\nu \in S_{n}$ and $D$ is a diagonal matrix $$D=diag(d_1, d_2, \ldots, d_n).$$
\end{definition}
\noindent The $P D-$matrices are also called monomials.
\begin{remark}\label{primitive}
\noindent In \cite{Davies} it was shown that, if $S$ is as in (\ref{pd}), it is possible to calculate its characteristic polynomial from the decomposition of the matrix $P_{\nu}.$ In fact, let
$P_{\nu} = R^{\ast} diag (P_{\theta_{1}}, P_{\theta_{2}}, \ldots, P_{\theta_{m}})R,$ where $R$ is a permutation matrix, $\nu$ is decomposed into $m$ disjoint cycles of order $\theta_{j}$ and $P_{\theta_{j}}$ is the permutation matrix of order $\theta_{j}, \ 1\leq j \leq m$.
The eigenvalues of $S=P_{\nu}D$ are the totally of the $\theta_{j}-$th complex roots
\begin{equation*}
    (\tilde{d}_{j1}\ldots\tilde{d}_{j\theta_{j}})^{\frac{1}{\theta_{j}}}, \quad 1\leq j \leq m,
\end{equation*}
where
\begin{equation*}
    RDR^{\ast} = diag(\tilde{d}_{1}, \ldots, \tilde{d}_{n}).
\end{equation*}
\end{remark}

\noindent Let $F=(f_{ij}) = \frac{1}{\sqrt{n}}\left(
\omega ^{\left( i-1\right) \left( j-1\right) }\right) _{1\leq i,j\leq n}$, be the Discrete Fourier Transform matrix, where
\begin{equation} \label{omega}
\omega= \cos{\frac{2\pi}{n}}+ i \sin{\frac{2\pi}{n}}.
\end{equation}

\begin{proposition}
\label{espPD}
Let $n$ be a positive integer such that $\rm{mdc}( n,g) =1$. Then, the square matrix $A$ of order $n$, is a $g$-circulant matrix if and only $A=FQ_{g^{-1}}D F^{\ast} $ where $D$ is a diagonal matrix and $F$ is the Discrete Fourier Transform matrix.
\end{proposition}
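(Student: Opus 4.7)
The plan is to use Proposition \ref{qc} (the decomposition $A=Q_gC$ with $C$ circulant) together with the fact that every circulant matrix is diagonalized by the DFT matrix, and then transport $Q_g$ through $F$ by establishing the commutation identity
\[
Q_g F = F Q_{g^{-1}}.
\]
Once this identity is available, the result is essentially immediate: if $A$ is $g$-circulant, then $A=Q_gC$; writing $C=F\Lambda F^{\ast}$ with $\Lambda$ diagonal, we get $A=Q_gF\Lambda F^{\ast}=FQ_{g^{-1}}\Lambda F^{\ast}$, which is of the required form with $D=\Lambda$. The converse runs in reverse: from $A=FQ_{g^{-1}}DF^{\ast}$ we obtain $A=Q_g(FDF^{\ast})$, and $FDF^{\ast}$ is circulant, so Proposition \ref{qc} gives that $A$ is $g$-circulant.

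The main step, and the only place where the hypothesis $\gcd(n,g)=1$ is really needed, is to prove $Q_gF=FQ_{g^{-1}}$ (the inverse $g^{-1}$ mod $n$ exists precisely under this hypothesis). I would do this by a direct entrywise computation. From the formula $a_{ij}=a_{j-(i-1)g\,(\mathrm{mod}\,n)}$ applied to $Q_g=g\text{-}\mathrm{circ}(1,0,\dots,0)$, the unique $1$ in row $i$ of $Q_g$ sits in column $1+(i-1)g\pmod n$, so
\[
(Q_gF)_{ij}=F_{1+(i-1)g,\,j}=\tfrac{1}{\sqrt{n}}\,\omega^{(i-1)g(j-1)}.
\]
Similarly, the unique $1$ in column $j$ of $Q_{g^{-1}}$ sits in row $1+(j-1)g\pmod n$ (since $j\equiv 1+(k-1)g^{-1}$ is equivalent to $k\equiv 1+(j-1)g$), hence
\[
(FQ_{g^{-1}})_{ij}=F_{i,\,1+(j-1)g}=\tfrac{1}{\sqrt{n}}\,\omega^{(i-1)(j-1)g}.
\]
These two expressions coincide, giving the claimed identity.

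With the commutation relation in hand, the equivalence is assembled as above; the final step just uses $F^{\ast}F=FF^{\ast}=I$ and the well-known spectral decomposition $C=F\Lambda F^{\ast}$ of circulant matrices (with the diagonal of $\Lambda$ given by the DFT of the first row of $C$). The only genuine obstacle is the index bookkeeping in the verification of $Q_gF=FQ_{g^{-1}}$; everything else is a formal rearrangement that uses results already recalled in the section.
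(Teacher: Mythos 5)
Your proposal is correct and takes essentially the same approach as the paper: decompose $A=Q_gC$ via Proposition \ref{qc}, write $C=FDF^{\ast}$, and verify the key commutation identity by an entrywise computation (the paper checks $FQ_g=Q_{g^{-1}}F$ and then swaps $g\leftrightarrow g^{-1}$, which is the same identity as your $Q_gF=FQ_{g^{-1}}$). You merely make the converse direction explicit, which the paper leaves implicit.
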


\begin{proof}
Recall that that $F$ is unitary. Therefore, we need to prove that
\begin{equation*}
F^{\ast}A=Q_{g^{-1}}DF^{\ast}.
\end{equation*}.

\noindent Let $Q_{g}=(q_{ij}).$ Then
\begin{eqnarray*}
 q _{ij}&=&\left\{
\begin{tabular}{cc}
$1$ & if $j=\left( 1+\left( i-1\right) g\right) (\rm{mod} \, n)$
\\
$0$ & {\small other cases,}
\end{tabular}%
\right. \\ & \\ &=&\left\{
\begin{tabular}{cc}
$1$ & si $i=\left( 1+\left( j-1\right) g^{-1}\right) (\rm{mod} \, n) $ \\
$0$ & {\small other cases.}%
\end{tabular}%
\right.
\end{eqnarray*}

\noindent If $A$ is $g$-circulant, by Proposition \ref{qc}, $A=Q_{g}C$ where $C$ is a circulant matrix. As
$C=FDF^{\ast }$, and $A=Q_{g}FDF^{\ast }$ if $FQ_{g}=Q_{g^{-1}}F$ then, changing $g^{-1}$ by $g$ we can conclude that $A=FQ_{g^{-1}}DF^{\ast},$
which proves the result.
\noindent The entry $ij$ from the matrix on the left side of the equality is:
\begin{eqnarray*}
\left( FQ_{g}\right) _{ij} &=&\sum_{k=1}^{n}f_{ik} q _{kj}
\\
&=&f_{i\left( \left( 1+\left( j-1\right) g^{-1}\right) \right) } \\
&=&\frac{1}{\sqrt{n}}\left( \omega ^{\left( i-1\right) \left( j-1\right)
g^{-1}}\right) .
\end{eqnarray*}

\noindent On the other hand, the entry  $ij$ on the right side of the equality is:

\begin{eqnarray*}
\left( Q_{g^{-1}}F\right) _{ij} &=&\sum_{k=1}^{n}\left( Q_{g^{-1}}\right)
_{ik}f_{kj} \\
&=&f_{\left( 1+\left( i-1\right) g^{-1}\right) j} \\
&=&\frac{1}{\sqrt{n}}\left( \omega ^{\left( i-1\right) \left( j-1\right)
g^{-1}}\right) .
\end{eqnarray*}
which proves the result.
\end{proof}

\begin{example}
Consider the $2$-circulant matrices, $A$, $Q_{2}$ and the circulant matrix $C$ as follows:
$$ A=2-circ\left( 1,2,3,4,5\right), Q_{2}=2-circ\left( 1,0,0,0,0\right) \text{\, and\, } C=circ\left( 1,2,3,4,5\right).$$

\noindent The next computations are obtained with $4$ decimal places. By Corollary \ref{qc}
we have
\[
A=Q_{2}C.
\]
The spectrum of $A$ is:
\[
\Sigma \left( A\right) =\left\{ 15,\pm 3.3437,\pm 3.3437i\right\},
\]
and the spectrum of $C$ is:
\[
\Sigma \left( C\right) =\left\{ 15,-2.5\pm 3.441i,-2.5\pm 0.8123i\right\} .
\]
\noindent As in $U(\mathbb{Z}/5\mathbb{Z})$ the muitiplicative inverse of $2$ is $3$
we have
\[
Q_{2}^{-1}=Q_{3}.
\]

\noindent Then, by Proposition \ref{espPD} the $PD$-matrix,
\[
S=Q_{3}diag\left(
15,-2.5+3.441i,-2.5-3.441i,-2.5+0,8123i,-2.5-0,8123i\right)
\]
has the same eigenvalues of  $A$.
%In fact, performing a direct calculus the eigenvalues of
%$S$, are
%$$
%\Sigma \left( S\right) =\left\{ -3.3437i,3.3437,-3.3437,3.3437i,15\right\} .
%$$
\end{example}

\section{Characterizing the spectra of certain $g$-circulant matrices}

\noindent In this section is determined in an explicit way the spectrum of a subclass of real $g$-circulant matrices. The next definition, is crucial here.
\begin{definition} \cite{Zhang}
A permutation matrix of order $n$ is called \textit{primary} if and only if the associated permutation is a cycle of length $n$.
\end{definition}

\noindent The next result is from \cite[Theorem 5.18]{Zhang} is important and also recalled:

\begin{theorem} \cite{Zhang} \label{Z}
A permutation matrix is irreducible if and only if it is similar by permutation matrices to a primary permutation matrix.
\end{theorem}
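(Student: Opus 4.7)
The idea is to translate the statement into the language of disjoint cycle decompositions in $S_n$ and to exploit the fact that permutation similarity of $P_\sigma$ by $P_\pi$ corresponds to conjugation $\pi\sigma\pi^{-1}$ in $S_n$, via the identity $P_\pi P_\sigma P_\pi^{-1} = P_{\pi\sigma\pi^{-1}}$. Recall further that irreducibility of a nonnegative matrix is equivalent to strong connectedness of its associated digraph, a property preserved by any relabelling of the vertex set and hence by any permutation similarity.

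For the sufficiency, I would suppose that $P$ is permutation-similar to a primary permutation matrix $P_0$ associated with an $n$-cycle. The digraph of $P_0$ is a single directed $n$-cycle on $\{1,\ldots,n\}$, which is strongly connected; hence $P_0$ is irreducible, and by similarity so is $P$.

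For the necessity, I would start with the disjoint cycle decomposition $\sigma = \tau_1\tau_2\cdots\tau_m$ of the permutation $\sigma$ associated with $P$, with cycle lengths $\theta_1,\ldots,\theta_m$ summing to $n$. Choose $\pi \in S_n$ that sends the support of each $\tau_j$ bijectively, respecting the cyclic order of $\tau_j$, onto the consecutive block $\{\theta_1+\cdots+\theta_{j-1}+1,\ldots,\theta_1+\cdots+\theta_j\}$. Then $\pi\sigma\pi^{-1}$ becomes a product of cycles supported on consecutive blocks, and correspondingly $P_\pi P P_\pi^{-1} = \mathrm{diag}(P_{\theta_1},\ldots,P_{\theta_m})$ with each diagonal block a primary permutation matrix. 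If $m \geq 2$, this block-diagonal form witnesses reducibility, contradicting the hypothesis; therefore $m=1$, $\sigma$ is a single $n$-cycle, and $P$ is permutation-similar to the primary matrix just constructed.

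The main technical point — really the only one requiring care — is the bookkeeping behind the block-diagonalization, which rests on the standard identity $\pi(a_1\,a_2\,\cdots\,a_k)\pi^{-1} = (\pi(a_1)\,\pi(a_2)\,\cdots\,\pi(a_k))$ and on verifying that the chosen $\pi$ sends each cycle of $\sigma$ to a \emph{standard} cyclic shift on its target block. Once this is established, both implications reduce to the geometric observation that the digraph of $P_\sigma$ is the disjoint union of directed cycles, one per cycle of $\sigma$, and is therefore strongly connected precisely when $\sigma$ acts as a single $n$-cycle on $\{1,\ldots,n\}$.
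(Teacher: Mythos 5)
Your proof is correct, but note that the paper itself offers no proof of this statement: it is quoted as background from Zhang (Theorem 5.18), so there is nothing internal to compare against. Your route --- turning permutation similarity into conjugation in $S_n$ via $P_\pi P_\sigma P_\pi^{-1}=P_{\pi\sigma\pi^{-1}}$, relabelling the disjoint cycles of $\sigma$ onto consecutive blocks so that the matrix becomes $\mathrm{diag}(P_{\theta_1},\ldots,P_{\theta_m})$, and reading irreducibility off the strong connectedness of the associated digraph --- is exactly the standard argument, and both implications go through as you sketch them. Two small points to make explicit in a full write-up: fixed points of $\sigma$ must be counted as cycles of length $1$ (otherwise $m$ and the block-diagonal witness of reducibility are misstated), and the identity $P_\pi P_\sigma P_\pi^{-1}=P_{\pi\sigma\pi^{-1}}$ holds with $\pi$ possibly replaced by $\pi^{-1}$ depending on whether one takes $(P_\sigma)_{ij}=\delta_{i,\sigma(j)}$ or $(P_\sigma)_{ij}=\delta_{\sigma(i),j}$; either convention works, but one should be fixed and used consistently.
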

%Therefore, by Theorem \ref{Z} the three concepts of primitive, primary and irreducible permutation matrices are equivalent.

\begin{theorem}
Suppose that $\rm {mdc}( n,g) =1.$ Let $\mathbf{W}$ be a
submatrix of order $n-1$ such that
\begin{equation} \label{matrixQ}
Q_{g}=%
\begin{pmatrix}
1 & \mathbf{0} \\
\mathbf{0} & \mathbf{W}%
\end{pmatrix},
\end{equation}
then $\ \mathbf{W}$ is a primary permutation matrix if and only if
\begin{equation}
n\nmid \left( g^{\ell _{2}}-g^{\ell _{1}}\right), \quad 0\leq \ell
_{1}<\ell _{2}\leq n-2.  \label{cond2}
\end{equation}
\end{theorem}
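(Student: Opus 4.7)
The plan is to explicitly identify the permutation associated with $Q_g$ and to reduce the claim to an elementary orbit computation in $\mathbb{Z}/n\mathbb{Z}$. First I would apply the entry formula $a_{ij}=a_{j-(i-1)g}$ (indices modulo $n$) for a $g$-circulant to $Q_g = g-circ(1,0,\ldots,0)$: the unique $1$ in row $i$ sits in column $1+(i-1)g \pmod n$. Taking $i=1$ places a $1$ at $(1,1)$; conversely, a $1$ in column $1$ requires $(i-1)g \equiv 0 \pmod n$, which by $\mathrm{mdc}(n,g)=1$ forces $i=1$. Thus the first row and first column of $Q_g$ contain no other nonzero entries, which justifies the block form in (\ref{matrixQ}) and identifies $\mathbf{W}$ as the permutation matrix of the bijection $\pi$ of $\{2,\ldots,n\}$ given by $\pi(i)=1+(i-1)g \pmod n$.

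Next I would relabel via $x=i-1$. Under this bijection, $\pi$ becomes the multiplication-by-$g$ map $\mu_g\colon x\mapsto gx \pmod n$ on $\{1,2,\ldots,n-1\}$, which is well-defined and bijective because $\mathrm{mdc}(n,g)=1$. By the definition of a primary permutation matrix, $\mathbf{W}$ is primary precisely when $\mu_g$ is a single cycle of length $n-1$.

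The last step is to analyze the orbit of $1$ under $\mu_g$, namely $\{g^{k}\pmod n : k\ge 0\}$, whose length equals the multiplicative order of $g$ modulo $n$. Since this orbit lies inside an $(n-1)$-element set, having length $n-1$ forces the orbit to coincide with the whole set; and a permutation of an $(n-1)$-element set possessing an orbit of size $n-1$ is automatically a single $(n-1)$-cycle by a pigeonhole argument. Consequently, $\mathbf{W}$ is primary if and only if $g^{0},g^{1},\ldots,g^{n-2}$ are pairwise distinct modulo $n$, which is exactly the condition $n\nmid g^{\ell_2}-g^{\ell_1}$ for $0\le \ell_1<\ell_2\le n-2$, that is, (\ref{cond2}). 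The main obstacle is the first step, namely carefully reading off the permutation associated with $Q_g$ and checking the block structure; after that, the orbit argument is short and elementary, and it is also worth noting, though not needed here, that the condition implicitly forces $n$ to be prime (since otherwise $\varphi(n)<n-1$ makes an orbit of length $n-1$ impossible).
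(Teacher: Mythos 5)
Your proof is correct and follows essentially the same route as the paper: identify the permutation underlying $Q_g$ as $i\mapsto 1+(i-1)g \pmod n$, note that $\mathbf{W}$ corresponds to its restriction to $\{2,\ldots,n\}$ (equivalently, multiplication by $g$ on the nonzero residues), and observe that it is a single $(n-1)$-cycle exactly when $g^{0},g^{1},\ldots,g^{n-2}$ are pairwise distinct modulo $n$, i.e.\ condition (\ref{cond2}). The only differences are cosmetic: you verify the block structure of $Q_g$ and make explicit the pigeonhole step (an orbit of length $n-1$ inside an $(n-1)$-element set forces a single cycle), which the paper leaves implicit.
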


\begin{proof}
Firstly it easy to see that the permutation $\nu $ that defines the matrix $Q_{g}$ is given by:
\begin{equation*}
\nu =
\begin{pmatrix}
1 & 2 & 3 & 4 & 5 & \ldots  &  & n-1 & n \\
1 & 1+g & 1+2g & 1+3g & 1+4g & \ldots  &  & 1+\left( n-2\right) g & 1+\left(
n-1\right) g
\end{pmatrix}.
\end{equation*}
or, in another way, if\ $k\neq 1,\ $
\begin{equation*}
k\rightarrow 1+\left( k-1\right) g, \quad 1\leq k\leq n.
\end{equation*}
\noindent Thus, the submatrix $\mathbf{W}$ is the matrix associated to the following permutation:
\begin{equation*}
2=1+g^{0}\rightarrow 1+g\rightarrow 1+g^{2}\rightarrow 1+g^{3}\rightarrow
\cdots \rightarrow 1+g^{n-2}.
\end{equation*}%
Therefore $\nu \ $ is a permutation with only one cycle of length $n-1$ if for
 $
\ell _{1}\neq \ell _{2}$,

\begin{equation*}
1+g^{\ell _{1}}\neq 1+g^{\ell _{2}}\left( \rm{mod} \, n\right), \quad%
0\leq \ell _{1}<\ell _{2}\leq n-2,
\end{equation*}%
that is equivalent to the condition in (\ref{cond2}).
\end{proof}

%\begin{theorem}
%Let $n$ and $g$ such that $n$ is prime and $\rm{mdc}( n,g) =1.$ Let $\mathbf{%
%W}$ be a submatrix of order $n-1$ such that
%\begin{equation*}
%Q_{g}=
%\begin{pmatrix}
%1 & \mathbf{0} \\
%\mathbf{0} & \mathbf{W}%
%\end{pmatrix}.
%\end{equation*}%
%Then, $\ \mathbf{W}$ is a primary permutation matrix if and only if

%\begin{equation}
%n\nmid \left( g^{\ell }-1\right) \text{ with }1\leq \ell \leq n-3.
%\label{cond3}
%\end{equation}
%\end{theorem}

%\begin{proof}
%From the condition in (\ref{cond2}) the matrix $\mathbf{W}$ is a primary permutation matrix
%if and only if $n\nmid g^{\ell _{1}}\left( g^{\ell _{2}-\ell
%_{1}}-1\right) $ with $0\leq \ell _{1}<\ell _{2}\leq n-2.$ As $n$ is prime
%$n\nmid g^{\ell _{1}}$ or $n\nmid \left( g^{\ell _{2}-\ell_{1}}-1\right)$. As the former condition does not hold because $\rm {mdc} (
%n,g) =1,$ the condition (\ref{cond2}) is equivalent to the condition in  (\ref{cond3}).
%\end{proof}

\begin{corollary}
Let $n$ be a prime integer and suppose that $\rm {mdc}( n,g) =1.$ Let $\mathbf{W}$ be a submatrix of order $n-1$ as in (\ref{matrixQ}).
Then $\ \mathbf{W}$ is a primary permutation matrix if and only if
\begin{equation}
g^{\ell }\text{ }\neq 1\ \left( \rm{mod} \ n\right), \quad 1\leq \ell
\leq n-2.  \label{cond4}
\end{equation}%
%Moreover, if $n$ is prime the condition in (\ref{cond4}) only changes for $1\leq \ell \leq n-3.$
\end{corollary}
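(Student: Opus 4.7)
The plan is to obtain this as a direct specialization of the preceding theorem by exploiting the fact that, when $\gcd(n,g)=1$, every power of $g$ is a unit modulo $n$.

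First, I would invoke the theorem just proved: $\mathbf{W}$ is primary if and only if $n\nmid \bigl(g^{\ell_2}-g^{\ell_1}\bigr)$ for all $0\le \ell_1<\ell_2\le n-2$. So the task reduces to showing that, under the hypothesis that $n$ is prime and $\gcd(n,g)=1$, this family of non-divisibility conditions is equivalent to (\ref{cond4}).

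Next, I would write $g^{\ell_2}-g^{\ell_1}=g^{\ell_1}\bigl(g^{\ell_2-\ell_1}-1\bigr)$. Since $\gcd(n,g)=1$ (and $n$ being prime makes this especially clean, because $(\mathbb{Z}/n\mathbb{Z})^{\ast}$ is simply the nonzero residues), we have $\gcd(n,g^{\ell_1})=1$, and hence $n\mid (g^{\ell_2}-g^{\ell_1})$ if and only if $n\mid (g^{\ell_2-\ell_1}-1)$, i.e.\ $g^{\ell_2-\ell_1}\equiv 1\pmod n$. Setting $\ell=\ell_2-\ell_1$, the pair index $(\ell_1,\ell_2)$ with $0\le \ell_1<\ell_2\le n-2$ makes $\ell$ range over exactly $\{1,2,\ldots,n-2\}$, so the theorem's condition is equivalent to $g^{\ell}\not\equiv 1\pmod n$ for all $1\le \ell\le n-2$, which is precisely (\ref{cond4}).

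The only real step is the factorization and cancellation of $g^{\ell_1}$ modulo $n$, which is immediate from coprimality; consequently there is no genuine obstacle, and the result is a clean consequence of the previous theorem together with elementary modular arithmetic. It is worth noting that the primality hypothesis is not strictly needed for the equivalence itself (coprimality already suffices), but it illuminates the statement: condition (\ref{cond4}) says exactly that $g$ is a primitive root modulo~$n$, i.e.\ that $g$ generates the cyclic group $(\mathbb{Z}/n\mathbb{Z})^{\ast}$ of order $n-1$.
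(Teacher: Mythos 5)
Your proposal is correct and follows essentially the same route as the paper: both reduce condition (\ref{cond2}) to (\ref{cond4}) by cancelling the invertible factor $g^{\ell_1}$ modulo $n$ and observing that the differences $\ell_2-\ell_1$ range over $1,\ldots,n-2$. Your side remark that coprimality alone suffices for this equivalence is accurate but does not change the argument.
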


\begin{proof}
From (\ref{cond2}) we have
\begin{equation*}
g^{\ell _{1}}\text{ }\neq g^{\ell _{2}}\ \left( \rm{mod} \ n\right),  \quad 0\leq \ell _{1}<\ell _{2}\leq n-2.
\end{equation*}%
As $g$ is invertible in $\mathbb{Z}/n\mathbb{Z},$ the element $g^{\ell _{1}}$ is also invertible, therefore the previous equation is equivalent to
\begin{equation*}
g^{\ell _{2}-\ell _{1}}\text{ }\neq 1\ \left( \rm{mod}\ n\right), \quad 0\leq \ell _{1}<\ell _{2}\leq n-2,
\end{equation*}
which is clearly equivalent to the condition in
(\ref{cond4}).
%When $n$ is prime, the condition (\ref{cond4})  follows directly from the condition in (\ref{cond3}) only with the difference in the set where
%$\ell$ ranges, that is, $ 1\leq \ell \leq n-3.$
\end{proof}

\begin{example}
\label{ejemplomodelo}
Consider that $n=11$ and $g=7.$ Therefore, the residue classes modulus $11$ of the powers of $7$ with exponent between $1$ and $9 (=n-2)$  are given in the table:

\begin{equation*}
\begin{tabular}{|c|c|c|c|c|c|c|c|c|}
\hline
$7$ & $7^{2}$ & $7^{3}$ & $7^{4}$ & $7^{5}$ & $7^{6}$ & $7^{7}$ & $7^{8}$& $7^{9}$ \\
\hline
$7$ & $5$ & $2$ & $3$ & $10$ & $4$ & $6$ & $9$ & $8$ \\ \hline
\end{tabular}%
\end{equation*}%
Thus, the condition (\ref{cond4}) holds and then $\mathbf{W}$ is primary. In fact, let \begin{equation*}
Q_{7}=%
\begin{pmatrix}
1 & \mathbf{0} \\
\mathbf{0} & \mathbf{W}
\end{pmatrix},
\end{equation*}
where
\begin{equation*}
\mathbf{W=}%
\begin{pmatrix}
0 & 0 & 0 & 0 & 0 & 0 & 1 & 0 & 0 & 0 \\
0 & 0 & 1 & 0 & 0 & 0 & 0 & 0 & 0 & 0 \\
0 & 0 & 0 & 0 & 0 & 0 & 0 & 0 & 0 & 1 \\
0 & 0 & 0 & 0 & 0 & 1 & 0 & 0 & 0 & 0 \\
0 & 1 & 0 & 0 & 0 & 0 & 0 & 0 & 0 & 0 \\
0 & 0 & 0 & 0 & 0 & 0 & 0 & 0 & 1 & 0 \\
0 & 0 & 0 & 0 & 1 & 0 & 0 & 0 & 0 & 0 \\
1 & 0 & 0 & 0 & 0 & 0 & 0 & 0 & 0 & 0 \\
0 & 0 & 0 & 0 & 0 & 0 & 0 & 1 & 0 & 0 \\
0 & 0 & 0 & 1 & 0 & 0 & 0 & 0 & 0 & 0%
\end{pmatrix}.
\end{equation*}
The eigenvalues of $\mathbf{W}$ with $4$ decimal places are in the set:%
\begin{equation*}
\tiny{\Sigma \left( \mathbf{W}\right)} \tiny{=\left\{ \pm 1.0000,-0.8090\pm
0.5878i,-0.3090\pm 0.9511i,0.3090\pm 0.9511i,0.8090\pm 0.5878i\right\}.}
\end{equation*}
%The multiplicity of the eigenvalue $1$ of $\mathbf{W}$ is $1$ and therefore the matrix is primary.
\end{example}

%\begin{example}
%Consider now that $n=7$ and $g=2.$ Therefore, the residue classes modulus $7$ of the powers of $2$ with exponent between $1$ and $5$  are given in %the table:
%\begin{equation*}
%\begin{tabular}{|c|c|c|c|c|}
%\hline
%$2$ & $2^{2}$ & $2^{3}$ & $2^{4}$ & $2^{5}$ \\ \hline
%$2$ & $4$ & $1$ & $2$ & $4$ \\ \hline
%\end{tabular}
%\end{equation*}
%Therefore the condition (\ref{cond4}) does not hold
%and then  $\mathbf{W}$ is not primary. In fact, in this case,
%\begin{equation*}
%Q_{2}=%
%\begin{pmatrix}
%1 & 0 & 0 & 0 & 0 & 0 & 0 \\
%0 & 0 & 1 & 0 & 0 & 0 & 0 \\
%0 & 0 & 0 & 0 & 1 & 0 & 0 \\
%0 & 0 & 0 & 0 & 0 & 0 & 1 \\
%0 & 1 & 0 & 0 & 0 & 0 & 0 \\
%0 & 0 & 0 & 1 & 0 & 0 & 0 \\
%0 & 0 & 0 & 0 & 0 & 1 & 0%
%\end{pmatrix}%
%\end{equation*}%
%and
%\begin{equation*}
%\mathbf{W=}%
%\begin{pmatrix}
%0 & 1 & 0 & 0 & 0 & 0 \\
%0 & 0 & 0 & 1 & 0 & 0 \\
%0 & 0 & 0 & 0 & 0 & 1 \\
%1 & 0 & 0 & 0 & 0 & 0 \\
%0 & 0 & 1 & 0 & 0 & 0 \\
%0 & 0 & 0 & 0 & 1 & 0%
%\end{pmatrix}%
%\end{equation*}

%\noindent The eigenvalues of $\mathbf{W}$ with $4$ decimal places are in the set:%
%\begin{equation*}
%\Sigma \left( \mathbf{W}\right) =\left\{ 1.0000,1.0000,-0.500\pm
%0.8660i,-0.500\pm 0.8660i\right\}
%\end{equation*}

%\noindent The multiplicity of the eigenvalue $1$ of $\mathbf{W}$ is $2$ and thus $%
%\mathbf{W}$ is not a primary permutation matrix.
%\end{example}

\begin{corollary}
\label{equivallente}
Let $n$ and $g$ integers such that $n$ is prime and $\rm{mdc}( n,g) =1.$ Let $\mathbf{W}$ be a submatrix of order $n-1$ as in (\ref{matrixQ}).
Then $\mathbf{W}$ is a primary permutation matrix if and only if the cyclic subgroup spanned by $g$ in $U(\mathbb{Z}/n\mathbb{Z})$ has order $n-1$, that is, if and only if $g$ is a cyclic generator of $U(\mathbb{Z}/n\mathbb{Z}).$
\end{corollary}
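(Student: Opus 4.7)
The plan is to deduce this from the previous corollary combined with a basic fact of group theory. The previous corollary already characterizes $\mathbf{W}$ being primary by the condition
\[
g^{\ell} \not\equiv 1 \;(\mathrm{mod}\ n), \quad 1 \leq \ell \leq n-2,
\]
so the real content is to translate this numerical condition into the statement that $g$ generates $U(\mathbb{Z}/n\mathbb{Z})$.

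First I would observe that since $\mathrm{mdc}(n,g) = 1$, the residue class of $g$ lies in $U(\mathbb{Z}/n\mathbb{Z})$, so the cyclic subgroup $\langle g \rangle$ is well defined, and its order coincides with the multiplicative order $\mathrm{ord}_n(g)$, i.e.\ the smallest positive integer $k$ such that $g^k \equiv 1 \pmod n$. Since $n$ is prime, $|U(\mathbb{Z}/n\mathbb{Z})| = n-1$, and by Lagrange's theorem $\mathrm{ord}_n(g)$ divides $n-1$; equivalently, by Fermat's little theorem, $g^{n-1} \equiv 1 \pmod n$.

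Next I would argue the two directions. If $g$ generates $U(\mathbb{Z}/n\mathbb{Z})$, then $\mathrm{ord}_n(g) = n-1$, so no smaller positive power of $g$ can be congruent to $1$, which gives precisely the condition (\ref{cond4}) and therefore, by the previous corollary, $\mathbf{W}$ is primary. Conversely, if $\mathbf{W}$ is primary, the previous corollary yields $g^{\ell} \not\equiv 1 \pmod n$ for all $1 \leq \ell \leq n-2$; combined with $\mathrm{ord}_n(g) \mid n-1$, this forces $\mathrm{ord}_n(g) = n-1$, so $\langle g \rangle = U(\mathbb{Z}/n\mathbb{Z})$.

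There is no real obstacle here: the argument is a direct restatement of (\ref{cond4}) in the language of the multiplicative group modulo a prime, using only Lagrange's theorem and the primality of $n$.
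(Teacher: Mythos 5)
Your proof is correct and follows essentially the same route as the paper: both reduce the statement to condition (\ref{cond4}) via the preceding corollary and then finish with elementary group theory in $U(\mathbb{Z}/n\mathbb{Z})$. The only cosmetic difference is that the paper phrases the last step as the powers $g^{0},g^{1},\ldots,g^{n-2}$ being pairwise distinct in a group of order $n-1$, whereas you phrase it via the multiplicative order of $g$ and Lagrange's theorem; these are equivalent formulations of the same argument.
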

\begin{proof} The matrix $ \mathbf{W}$ is a primary permutation matrix if and only if the condition
(\ref{cond4}) holds and therefore, if and only if, the powers in $U(\mathbb{Z}/n\mathbb{Z})$
\begin{equation}
g^{\ell }, \quad  0\leq \ell \leq n-2.  \label{cond8}
\end{equation}
are pairwise distinct. This leads to the fact that the condition in the statement must hold.
\end{proof}

\begin{corollary}
\label{supercorolario}
Let $n$ and $g$ integers such that $n$ is prime, $g<n$ and $\rm{mdc}( n,g) =1.$
Let $\mathbf{W}$ be a submatrix of order $n-1$ as in (\ref{matrixQ}).
Then, $ \mathbf{W}$ is a primary permutation matrix if and only if \ $\forall 1\leq d<n-1,$
\begin{eqnarray}\label{supercondicion}
  d|(n-1) \  \Rightarrow \  g^{d}\neq 1(\rm{mod} \,n)
\end{eqnarray}

\end{corollary}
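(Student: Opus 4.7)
The plan is to reduce this statement to Corollary \ref{equivallente}, which already tells us that $\mathbf{W}$ is a primary permutation matrix if and only if $g$ is a cyclic generator of $U(\mathbb{Z}/n\mathbb{Z})$. Since $n$ is prime, this group has order $n-1$, so the problem becomes the purely group-theoretic question: when is an element $g$ of a finite cyclic group of order $n-1$ a generator? The answer, by Lagrange's theorem, is characterized by the behaviour of $g^d$ only on divisors $d$ of $n-1$.

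Concretely, I would proceed in two steps. First, I would invoke Corollary \ref{equivallente} to replace the geometric condition (primality of $\mathbf{W}$) with the algebraic condition that $\mathrm{ord}(g) = n-1$ in $U(\mathbb{Z}/n\mathbb{Z})$. Second, I would argue the equivalence with (\ref{supercondicion}) in two directions. For the forward direction, assume $\mathrm{ord}(g) = n-1$; then for any proper divisor $d$ of $n-1$ with $d < n-1$ we have $d < \mathrm{ord}(g)$, hence $g^d \neq 1 \pmod n$. For the converse, suppose $\mathrm{ord}(g) = k < n-1$. By Lagrange, $k$ divides $|U(\mathbb{Z}/n\mathbb{Z})| = n-1$, so $k$ is a divisor of $n-1$ strictly less than $n-1$ with $g^k = 1 \pmod n$, contradicting (\ref{supercondicion}).

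I do not expect a real obstacle here: the content is a direct application of Lagrange's theorem combined with the previous corollary. The only point requiring care is the standard observation that to check whether $g$ has maximal order it suffices to test $g^d$ on divisors $d$ of $n-1$ rather than on all $d$ in the range $1 \leq d \leq n-2$, which is precisely what makes (\ref{supercondicion}) a sharpening of (\ref{cond4}) in the prime case.
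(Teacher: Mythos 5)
Your proposal is correct and follows essentially the same route as the paper: both reduce the statement to Corollary \ref{equivallente} (primality of $\mathbf{W}$ is equivalent to $g$ generating $U(\mathbb{Z}/n\mathbb{Z})$) and then settle both directions with Lagrange's theorem, noting that the order of $g$ divides $n-1$. Your write-up simply makes the contrapositive step in the converse direction slightly more explicit than the paper does.
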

\begin{proof}
As the order of the set $U(\mathbb{Z}/n\mathbb{Z})$ is $n-1$, and the order of the cyclic subgroup generated by  $y\in U(\mathbb{Z}/n\mathbb{Z})$ is a divisor $d$ of $n-1$, \cite{fraleigh}, if the condition (\ref{supercondicion}) holds then, the order of the cyclic subgroup generated by $g$ must be $n-1.$ Thus, by Corollary \ref{equivallente} the matrix $ \mathbf{W}$ is a primary permutation matrix.
Reciprocally, if $ \mathbf{W}$ is a primary permutation matrix, by Corollary \ref{equivallente}
the cyclic subgroup generated by $g\in U(\mathbb{Z}/n\mathbb{Z})$ has order $n-1$ implying that  the statement holds.
\end{proof}
\begin{example}
At Example \ref{ejemplomodelo}, $n-1=10$ and the positive divisors of $10$, strictly less than $10$ are $1,2,5$. Recall that $g=7$ and now we have:
\begin{align*}
    7^{1}&=7\neq 1\pmod{\ 11},\\
    7^{2}&=5\neq 1\pmod{\ 11},\\
    7^{5}&=10\neq 1\pmod{\ 11}.
\end{align*}
Therefore $7$ is a cyclic generator of $U(\mathbb{Z}/11\mathbb{Z})$. Note that $3^{5}\equiv 1 \pmod{\ 11}$. Then $3$ is not a cyclic generator in $U(\mathbb{Z}/11\mathbb{Z})$.
\end{example}
\begin{theorem}
\label{eigenvalues}
Let $n$ and $g$ integers, such that $n$ is prime, $g<n$, $\rm{mdc}(n,g) =1$, $\mathbf{W}$ is a submatrix of $Q_g$ as in (\ref{matrixQ}), and $g$ is a cyclic generator of $U(\mathbb{Z}/n\mathbb{Z})$. Let $A
$ and $C$ real square submatrices of order $n$, where $A$ is $g$-circulant and $C$ is circulant, repectively, such that
\[
A=Q_{g}C\text{,}
\]
where $Q_{g}$ is as in (\ref{matrixQ}).
Then, the eigenvalues of $A$ are in the set
\begin{equation}\label{set}
\left\{ \lambda _{1},\beta ^{%
\frac{1}{n-1}},\beta ^{\frac{1}{n-1}}\varphi ,\beta ^{\frac{1}{n-1}}\varphi
^{2},\ldots ,\beta ^{\frac{1}{n-1}}\varphi ^{n-2}\right\},
\end{equation}
where
$$
\varphi
=\cos \frac{2\pi }{n-1}+i\sin \frac{2\pi }{n-1},$$
$$\beta =\left(
\left\vert \lambda _{2}\right\vert \left\vert \lambda _{3}\right\vert \ldots
\left\vert \lambda _{\frac{n-1}{2}}\right\vert \right) ^{2}, \ \text{and} \quad
\lambda_{1},\lambda _{2},\ldots ,\lambda _{\frac{n-1}{2}}$$ are the eigenvalues of the circulant matrix $C.$
\end{theorem}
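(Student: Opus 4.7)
The strategy is to reduce the eigenvalue problem for $A$ to that of a $PD$-matrix whose permutation factor has a transparent cycle structure, and then combine Remark \ref{primitive} with the conjugate-pair structure of the spectrum of a real circulant.

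First, I would apply Proposition \ref{espPD}: since $\gcd(n,g)=1$, we may write $A = F Q_{g^{-1}} D F^{\ast}$, where $F$ is the Discrete Fourier Transform matrix and $D = \mathrm{diag}(\lambda_1,\lambda_2,\ldots,\lambda_n)$ is the diagonal matrix arising from the Fourier diagonalization $C = F D F^{\ast}$ of the circulant factor. Because $F$ is unitary, $A$ is similar to the $PD$-matrix $S = Q_{g^{-1}} D$. The hypothesis that $g$ generates $U(\mathbb{Z}/n\mathbb{Z})$ forces $g^{-1}$ to generate the same group (the inverse of a generator of a finite cyclic group is itself a generator), so Corollary \ref{equivallente} applied to $g^{-1}$ tells us that the $(n-1)\times(n-1)$ submatrix $\mathbf{W}'$ of $Q_{g^{-1}}$ obtained by deleting the first row and column is a primary permutation matrix. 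Equivalently, the permutation $\nu$ associated with $Q_{g^{-1}}$ decomposes into exactly two disjoint cycles: the fixed point $\{1\}$ of length $\theta_1 = 1$, and a single cycle of length $\theta_2 = n-1$ on $\{2,3,\ldots,n\}$.

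Remark \ref{primitive} then determines the spectrum of $S$ directly. The $1$-cycle contributes the single eigenvalue $d_1 = \lambda_1$. The $(n-1)$-cycle contributes the $(n-1)$-th roots of $\prod_{j=2}^{n}\lambda_j$, since the reordering permutation $R$ in Remark \ref{primitive} merely shuffles the diagonal entries of $D$ and so leaves their product invariant. It remains to identify this product with $\beta$ and take $(n-1)$-th roots.

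Finally, since $C$ is real and $n$ is an odd prime, Remark \ref{conju} gives $\lambda_{n+1-k} = \overline{\lambda_{k+1}}$ for $1 \leq k \leq (n-1)/2$. Pairing the factors of $\prod_{j=2}^{n}\lambda_j$ accordingly produces a product of squared moduli, hence a positive real number equal to $\beta$; its $(n-1)$-th roots are exactly $\beta^{1/(n-1)}\varphi^{\ell}$ for $\ell = 0,1,\ldots,n-2$. Combined with the fixed-point contribution $\lambda_1$, this yields the set displayed in (\ref{set}). The main technical step is the cycle-structure analysis of $Q_{g^{-1}}$: translating the group-theoretic hypothesis on $g$ into the statement that $Q_{g^{-1}}$ has exactly one nontrivial cycle, of length $n-1$. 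This is precisely the content of Corollary \ref{equivallente}, so once that translation is in place the rest is a direct application of the tools already developed in the paper.
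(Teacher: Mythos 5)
Your proposal is correct and follows essentially the same route as the paper: diagonalize via Proposition \ref{espPD} to reduce to the $PD$-matrix $Q_{g^{-1}}D$, use the primality of the order-$(n-1)$ block of $Q_{g^{-1}}$ (the paper gets this by noting $\mathbf{W}^{-1}$ is primary, you by noting $g^{-1}$ is also a generator --- the same fact), and then apply Remark \ref{primitive} together with the conjugate-pair structure of Remark \ref{conju} to identify the product of the last $n-1$ eigenvalues of $C$ with $\beta$. Your write-up is in fact somewhat more explicit about the cycle decomposition and the invariance of the product under the reordering permutation $R$ than the paper's own proof.
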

\begin{proof}
As $\mathbf{W}$ is a primary matrix, it is invertible with primary inverse $\mathbf{W}^{-1}$. Moreover, the matrix $(Q_g)^{-1}$ is $$(Q_g)^{-1}=Q_{g^{-1}}=\left(\begin{matrix}
1&0\\
0 & \mathbf{W^{-1}}\end{matrix}\right).$$
From Remark \ref{primitive} and Proposition \ref{espPD} the eigenvalues are the $(n-1)$-th roots of the product of the last $n-1$ eigenvalues of $C$. Therefore, from Remark \ref{conju} for the case of real matrices $A$ and $C$ respectively, and $n$ odd, the eigenvalues are displayed into conjugated pairs and therefore the product of the $n-1$ eigenvalues is as presented at the statement of the theorem.
\end{proof}

\begin{corollary}
Let $n$ be a prime integer and $g_1$ and $g_2 $ two cyclic generators of $U(\mathbb{Z}/n\mathbb{Z}).$ Let $C$ be a circulant matrix of order $n$ and, consider the following two $g_1$-circulant and $g_2$-circulant matrices of order $n,$ respectively,  $A=Q_{g_{1}}C$ and $B=Q_{g_{2}}C.$ Then $A$ and $B$ have the same eigenvalues. In particular, if $g_2=(g_1)^{-1}$, the $g$-circulant matrices $Q_{g_{1}}C$ and $Q_{g_{1}^{-1}}C$ have the same eigenvalues.

 \end{corollary}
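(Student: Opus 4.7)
The proof plan is almost immediate once we recognize that Theorem \ref{eigenvalues} already does the heavy lifting: it expresses the spectrum of a matrix of the form $Q_g C$ entirely in terms of the eigenvalues of $C$, without any further dependence on the particular generator $g$. So my strategy is simply to apply that theorem twice.

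First, I would verify the hypotheses of Theorem \ref{eigenvalues} for each of $A = Q_{g_1}C$ and $B = Q_{g_2}C$. By assumption $n$ is prime and both $g_1, g_2$ are cyclic generators of $U(\mathbb{Z}/n\mathbb{Z})$; hence $\mathrm{mdc}(n,g_i)=1$ and (up to reducing $g_i$ modulo $n$, which leaves $Q_{g_i}$ unchanged) we may take $g_i < n$. By Corollary \ref{equivallente}, the block $\mathbf{W}$ appearing in the decomposition of $Q_{g_i}$ in (\ref{matrixQ}) is a primary permutation matrix in both cases. Therefore Theorem \ref{eigenvalues} applies to both $A$ and $B$, yielding
\begin{equation*}
\Sigma(A) \;=\; \Sigma(B) \;=\; \left\{ \lambda_{1},\, \beta^{\frac{1}{n-1}},\, \beta^{\frac{1}{n-1}}\varphi,\, \ldots,\, \beta^{\frac{1}{n-1}}\varphi^{n-2}\right\},
\end{equation*}
where $\lambda_1,\ldots,\lambda_{(n-1)/2}$ are the eigenvalues of $C$ and $\beta = (|\lambda_2|\cdots |\lambda_{(n-1)/2}|)^2$. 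Since the right-hand side depends only on $C$, we conclude $\Sigma(A) = \Sigma(B)$.

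For the second assertion, I need only check that if $g_1$ is a cyclic generator of $U(\mathbb{Z}/n\mathbb{Z})$ then so is $g_1^{-1}$. This is a standard fact from elementary group theory: in any cyclic group, the inverse of a generator is again a generator, because raising $g_1^{-1}$ to successive powers produces the same set $\{g_1^{-k} : k\ge 0\} = \{g_1^{k} : k\ge 0\}$. Once $g_1^{-1}$ is known to be a cyclic generator, the general statement applied with $g_2 = g_1^{-1}$ gives the special case.

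The main obstacle, if there is one, is really just ensuring that Theorem \ref{eigenvalues} is invoked with the correct hypotheses; no new computation is needed. The only subtle point worth emphasizing in writing is that the expression for the spectrum in (\ref{set}) is manifestly symmetric in the choice of cyclic generator, so the independence from $g$ is built into the statement of that theorem. The proof is therefore short and essentially a corollary in the strict sense of the word.
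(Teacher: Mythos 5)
Your proposal is correct and follows exactly the route the paper intends: the corollary is stated without proof precisely because, as you observe, Theorem \ref{eigenvalues} (whose hypotheses hold for both $g_1$ and $g_2$ by Corollary \ref{equivallente}) gives a spectrum depending only on $C$, and the inverse of a generator of a cyclic group is again a generator. Nothing further is needed.
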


\begin{remark}
It is clear that the spectrum in (\ref{set}) doesn't have the sufficient conditions to be the spectrum of a circulant matrix as it doesn't have the conditions of Remark \ref{conju}, in the odd case.
\end{remark}
\section{Reconstructing certain $g$-circulant matrices from its diagonal entries}

\noindent In this section we show that for a certain $n$ and $g$, a $g$-circulant matrix its completely determined by its diagonal entries.

\begin{remark}
\bigskip Note that the vector of the diagonal entries of the matrix in (\ref{mgcirc}) is the  vector
\begin{equation}
\Upsilon \left( A\right) =\left( a_{1},a_{n-g+2},a_{n-2g+3},\ldots
,a_{n-\left( \ell -1\right) g+\ell },\ldots ,a_{g}\right)^{T} .  \label{vecdiag}
\end{equation}%
Moreover,
\begin{equation*}
a_{1}=a_{n-0g+1}\ \text{\, and\, }\ a_{g}=a_{n-\left( n-1\right) g+n}.
\end{equation*}%
\end{remark}

\begin{theorem}
Let $n$ be a prime integer and let $g<n$ be a cyclic generator of $U(\mathbb{Z}/n\mathbb{Z}).$ Let
\begin{equation}
\label{vecfil}
\Phi \left( A\right) =\left( a_{1},a_{2},\ldots ,a_{n}\right)^{T}
\end{equation}
be the vector corresponding to the first row of a $g$-circulant matrix $A.$
Then
 \begin{equation}
\Upsilon \left( A\right) =Q_{(n-g+1) }\Phi \left( A\right) \label{produc}
\end{equation}.
\end{theorem}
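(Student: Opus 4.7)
The plan is to verify (\ref{produc}) entry by entry. Since $\Upsilon(A)$ is by definition the column vector of diagonal entries of $A$, the entry formula $a_{ij} = a_{j-(i-1)g}$ (indices modulo $n$) for a $g$-circulant matrix, recalled at the start of Section~2, immediately gives the $i$-th entry of $\Upsilon(A)$ as
\[
a_{ii} = a_{i-(i-1)g \,(\mathrm{mod}\, n)}.
\]
As a sanity check, the $\ell$-th component displayed in (\ref{vecdiag}), namely $a_{n-(\ell-1)g+\ell}$, is congruent to $a_{\ell-(\ell-1)g}$ modulo $n$, so the two descriptions of $\Upsilon(A)$ agree.

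Next I would read off the $i$-th entry of $Q_{n-g+1}\Phi(A)$. From the explicit form of $Q_k$ used in the proof of Proposition \ref{espPD}, the row $i$ of $Q_k$ has a single $1$ in column $1+(i-1)k \,(\mathrm{mod}\, n)$. Specializing to $k = n-g+1$ and applying to $\Phi(A)$ yields
\[
\bigl(Q_{n-g+1}\Phi(A)\bigr)_i = a_{1+(i-1)(n-g+1) \,(\mathrm{mod}\, n)}.
\]

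What remains is a one-line modular computation: since $n \equiv 0 \pmod n$,
\[
1 + (i-1)(n-g+1) \equiv 1 + (i-1)(1-g) = i - (i-1)g \pmod n,
\]
which matches the $i$-th entry of $\Upsilon(A)$ and yields (\ref{produc}). There is no real obstacle beyond bookkeeping of subscripts modulo $n$; the actual insight is just to identify $Q_{n-g+1}$ as the permutation matrix that realizes the reordering prescribed by (\ref{vecdiag}). I note that the hypotheses ``$n$ prime'' and ``$g$ a cyclic generator of $U(\mathbb{Z}/n\mathbb{Z})$'' are not needed for the entrywise identity itself, but they guarantee $\mathrm{mdc}(n, n-g+1) = \mathrm{mdc}(n, g-1) = 1$, so that $Q_{n-g+1}$ is a genuine permutation matrix and (\ref{produc}) exhibits $\Upsilon(A)$ as a true permutation of the entries of $\Phi(A)$.
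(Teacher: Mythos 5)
Your proof is correct, and it is a leaner version of the paper's own argument. The paper proceeds in two steps: it first checks that the diagonal subscripts $n-(\ell-1)g+\ell$ are pairwise distinct modulo $n$ (this is where $n$ prime and $g<n$ enter), so that the diagonal is a genuine permutation of the first row, and only then identifies the permutation by noting that consecutive subscripts in (\ref{vecdiag}) differ by $1-g\equiv n-g+1 \pmod{n}$. You instead verify (\ref{produc}) entrywise: $a_{ii}=a_{i-(i-1)g}$ from the entry formula for $g$-circulants, $\left(Q_{n-g+1}\Phi(A)\right)_i=a_{1+(i-1)(n-g+1)}$ from the explicit description of $Q_k$ used in the proof of Proposition \ref{espPD}, and the two subscripts agree modulo $n$ by a one-line congruence. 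This is the same subscript bookkeeping, but organized so that the distinctness step is not needed at all; as you observe, the identity (\ref{produc}) then holds for arbitrary $n$ and $g$, and the hypotheses serve only to make $Q_{n-g+1}$ an invertible permutation matrix (equivalently $n\nmid g-1$), which is exactly what the paper's distinctness computation establishes and what the subsequent reconstruction $\Phi(A)=Q_{n-g+1}^{-1}\Upsilon(A)$ actually requires. One small refinement to your closing remark: coprimality of $n$ with $g-1$ uses not only $n$ prime and $g<n$ but also $g\neq 1$, which for $n>2$ is supplied by the assumption that $g$ generates $U(\mathbb{Z}/n\mathbb{Z})$.
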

\begin{proof} For the subscripts of the diagonal elements of $A$, we note that
\begin{equation*}
\left( n-\left( k-1\right) g+k\right) \equiv \left( n-\left( \ell -1\right)
g+\ell \right) \left( \rm{mod} \ n\right), \quad 1\leq k<\ell \leq n
\end{equation*}%
if and only if
\begin{equation*}
n\mid (\ell -k) \text{ or\ }n\mid (g-1).
\end{equation*}
The former condition does not hold because $\ell -k\leq n-1<n$ and the latter condition also does not hold as well because $g$ is strictly less than $n$. Then, the diagonal entries of the $g$-circulant matrix $A$ in (\ref{mgcirc}) are a permutation of the first row of $A.$ Then, the difference $(\rm{mod} \, n)$  between two consecutive subscripts of the vector in (\ref{vecdiag}) is $1-g$, and  (\ref{produc}) holds.
\end{proof}
\begin{remark}
From the previous theorem we can say that for a given $n$ prime and $g$ a cyclic generator of $U(\mathbb{Z}/n\mathbb{Z})$  and, for a vector with $n$ given numbers $\mathbf{b}=\left(b_{1},b_{2,}\ldots ,b_{n}\right),$ a $g$-circulant matrix $A$ can be constructed in such a way that its diagonal elements are the elements of  $\mathbf{b}$.
Additionally, if in  $\mathbf{b}$, one of the coordinates is unknown but the greatest eigenvalue of $A$, say $\beta _{1},$ is known then, a $g$-circulant matrix $A$ whose diagonal entries are the elements of $\mathbf{b}$ and whose greatest eigenvalue is $\beta _{1}$ can also be constructed. This last fact is due to the conditions for $n$ and $g$, the $\rm{tr}(A)$ coincide with $\beta_1$ which allows to obtain the unknown coordinate of $\mathbf{b}.$
\end{remark}

\begin{example}
Consider the matrix $A=3-circ(1,2,3,4,5,6,7).$
whose diagonal is
\begin{equation*}
\mathbf{b}=\Upsilon \left( A\right) =\left( 1,6,4,2,7,5,3\right)^{T} \text{,\ }
\end{equation*}%
moreover
\begin{equation*}
Q_{1+\left( 7-3\right) }=Q_{5}=%
\begin{pmatrix}
1 & 0 & 0 & 0 & 0 & 0 & 0 \\
0 & 0 & 0 & 0 & 0 & 1 & 0 \\
0 & 0 & 0 & 1 & 0 & 0 & 0 \\
0 & 1 & 0 & 0 & 0 & 0 & 0 \\
0 & 0 & 0 & 0 & 0 & 0 & 1 \\
0 & 0 & 0 & 0 & 1 & 0 & 0 \\
0 & 0 & 1 & 0 & 0 & 0 & 0%
\end{pmatrix}.
\end{equation*}%
Then
\begin{equation*}
Q_{5}\Phi \left( A\right)=\left( 1,6,4,2,7,5,3\right)^{T}
=\Upsilon \left( A\right),
\end{equation*}
which verifies the equality in (\ref{produc}).
We can conclude, in this case, that given the diagonal elements, the first row of the $g$-circulant matrix (and in consequence all the remaining entries of the $g$-circulant matrix) can be obtained using the relation in (\ref{produc}) as this relation means that
\begin{equation*}
\Phi \left( A\right) =\left( Q_{5}\right) ^{-1}\Upsilon \left( A\right)
.
\end{equation*}
\end{example}

%%%%%%%%%%%%%%%%%%%%%%%%%%%%%%%%%%%%%%%
\section{An inverse eigenvalue problem for $g$-circulant matrices}

\noindent The spectra of certain classes of permutative matrices were studied in \cite{MAR2, MAR, robbiano}. In particular, spectral results for matrices partitioned into symmetric blocks of order $2$ were given. Using those results, sufficient conditions in order that a given list can be taken as the list of eigenvalues of a permutative matrix were obtained and the corresponding permutative matrices were constructed. In the light of these results in this section, we present sufficient conditions in order that a list $\Lambda$ can be taken as the spectrum of a class of $g$-circulant matrices that are a class of permutative matrices. Moreover, results on real circulant, real $g$-circulant and matrices partitioned into blocks are given and some results obtained in \cite{AMRH} are reviewed. Additionally, sufficient conditions for certain lists to be realizable by a class of permutative matrices are shown.

\subsection{Some results revisited}

\noindent We recall here the definition of permutative and $\mathbf{\gamma}$\emph{-permutative} matrices presented in \cite{MAR} and \cite{PP}, respectively. Moreover, some useful results used in the sequel will be also presented.

\begin{definition} \cite{PP}
A square matrix of order $n$ with $n\geq 2$ is called a \textit{permutative matrix} or permutative when all its rows (up to the first one) are permutations of its first row.
\end{definition}

\noindent The next definition was introduced in \cite{MAR}.

\begin{definition}  \cite{MAR} \label{ept}
Let $\mathbf{\gamma }=\left( \gamma _{1},\ldots ,\gamma _{n}\right) $ be an $n$-tuple whose elements are permutations in the symmetric group\ $S_{n}$, with $\gamma _{1}=id$.\ Let $\mathbf{a=}\left( a_{1},\ldots ,a_{n}\right) \in
\mathbb{C}^{n}$. Consider the vector,
\begin{equation*}
\gamma _{j}\left( \mathbf{a}\right) =\left( a_{\gamma _{j}\left( 1\right)
},\ldots ,a_{\gamma _{j}\left( n\right) }\right)
\end{equation*}%
and the matrix

\begin{equation}
\gamma \left( \mathbf{a}\right) =%
\begin{pmatrix}
\gamma _{1}\left( \mathbf{a}\right),
\gamma _{2}\left( \mathbf{a}\right),
\ldots,
\gamma_{n-1}\left( \mathbf{a}\right),
\gamma _{n}\left( \mathbf{a}\right)
\end{pmatrix}^{T}
.  \label{permut}
\end{equation}
A matrix $A$ of order $n$, is $\mathbf{\gamma}$\emph{-permutative} if
$A=\gamma \left( \mathbf{a}\right) $ for some $n$-tuple $\mathbf{a}$.
\end{definition}

\begin{definition}
\cite[Definition 8]{MAR}
If $A$ and $B$ are $\mathbf{\gamma }$-permutative by a common vector
$\mathbf{\gamma }=\left( \gamma _{1},\ldots ,\gamma _{n}\right)$
then they are called \textit{permutatively equivalent}.
\end{definition}

\noindent The class of circulant matrices and its properties were described in \cite{Karner}.

\noindent Let $p$ be a prime interger and $\mathbf{a}=\left( a_{1},a_{2},\ldots ,a_{p}\right)$ a $p$-tuple of complex numbers. We recall here the definition.

\begin{definition} \cite{Karner}
A \emph{\ real circulant matrix} is a matrix of the form
\begin{equation*}
circ\left( \mathbf{a}\right) =
\begin{pmatrix}
a_{1} & a_{2} & \ldots  & \ldots & a_{p} \\
a_{p} & a_{1} & a_{2} & \ldots & a_{p-1} \\
a_{p-1} & \ddots  & \ddots  & \ddots  & \vdots  \\
\vdots  & \ddots  & \ddots  & a_{1} & a_{2} \\
a_{2} & \ldots  & a_{p-1} & a_{p} & a_{1}
\end{pmatrix}.
\end{equation*}
\end{definition}

%\bigskip

%\noindent The next concept can be seen in \cite{Karner}. The entries of %the unitary discrete Fourier transform matrix
%$F=\left(
%f_{k\ell}\right)$ are:
%\begin{equation}
%\label{fourier-m}
%f_{k\ell}=\frac{1}{\sqrt{n}}\tau ^{(k-1)(\ell-1)},\quad 1\leq k,\ell\leq %p\ ,
%\end{equation}%
%where
%\begin{equation}
%\label{omega_root}
%\tau =\cos \frac{2\pi }{p}+i\sin \frac{2\pi }{p}.
%\end{equation}%

\noindent The spectrum of circulant matrices was also characterized in \cite{Karner}.
In fact, for
$\mathbf{c}=(c_1,\ldots,c_{p})$ and $C(\mathbf{c})=circ(c_1,\ldots,c_{p})$ then, $C\left( \mathbf{c}\right) =FD \left( \mathbf{c}\right) F^{\ast },$ with
$$
D\left( \mathbf{c}\right) =diag\left( \lambda _{1}\left( \mathbf{c}\right) ,\lambda
_{2}\left( \mathbf{c}\right) ,\ldots ,\lambda _{p}\left( \mathbf{c}\right) \right), $$
and
\begin{eqnarray}
\label{eigenvalues2}
\text{\ }\lambda _{k}\left( \mathbf{c}\right) =\sum\limits_{\ell=1}^{p}c_{\ell}\tau
^{(k-1)(\ell-1)}\text{,\quad\ }1\leq k \leq p,
\end{eqnarray}
where the entries of the discrete Fourier transform
$F=\left(
f_{k\ell}\right)$ are:
\begin{equation}
\label{fourier-m}
f_{k\ell}=\frac{1}{\sqrt{p}}\tau ^{(k-1)(\ell-1)},\quad 1\leq k,\ell\leq p\ ,
\end{equation}%
with
\begin{equation}
\label{omega_root}
\tau =\cos \frac{2\pi }{p}+i\sin \frac{2\pi }{p}.
\end{equation}.

\noindent Note that, by Remark \ref{conju}, $c_1,... ,c_{p}$ are real numbers if and only if $\lambda_1\in \mathbb{R}$ and $\lambda_ {n+1-k}=\overline {\lambda}_ {k+1}, \quad 1\leq k \leq p.$

\noindent Moreover, if $\mathbf{c}$ is defined as before and $\Lambda=\Lambda(\mathbf{c})=\left( \lambda _{1}\left( \mathbf{c}\right) ,\lambda _{2}\left(
\mathbf{c}\right) ,\ldots ,\lambda _{p}\left( \mathbf{c}\right) \right)$
then,
\begin{eqnarray}
\label{coefficients}
c_{k}=\frac{1}{p}\sum\limits_{\ell=1}^{p}\lambda _{\ell}\tau ^{-(k-1)(\ell-1)}\text{%
,\quad } 1 \leq k \leq p-1.
\end{eqnarray}

\noindent In \cite{AMRH}, using a result from \cite{Williamson} for matrices partitioned into blocks and into circulant blocks, the following spectral result was presented.
\begin{theorem}
\cite{AMRH}
\label{main}
Let $\mathbf{K}$ be an algebraically closed field of characteristic $0$ and suppose that $C=\left( C(i,j)\right) $ is an  $np \times np$ matrix partitioned into $p^2$ circulant blocks of order $n$ where for $ 1\leq i,j\leq p,$
\begin{equation}
\label{mtcsa}
C=\left( C(i,j)\right)_{1\leq i,j\leq p}, \ C(i,j)=circ\left(\mathbf{c}(i,j)\right),
\end{equation}
with
\begin{eqnarray*}
\mathbf{c}(i,j)=(c_1(i,j),\ldots,c_{n}(i,j)),
\\
c_k(i,j)\in \mathbf{K},\ 1\leq i,j\leq p, \quad  1\leq k\leq n.
\end{eqnarray*}
Then,
\begin{eqnarray}
\label{unionofsets}
\Sigma \left( C\right) =\bigcup_{k=1}^{n}\Sigma \left( S_k\right),
\end{eqnarray}
where, if $\omega$ is as in (\ref{omega}), then $\forall{1\leq k\leq n},$
\begin{align}
\label{mtcsk}
S_k=\left( s_k(i,j)\right)_{1\leq i,j\leq p}, \quad
s_k(i,j)=\sum\limits_{\ell=1}^{n}c_{\ell}\left(i,j\right)\omega
^{(k-1)(\ell-1)}.
\end{align}

\end{theorem}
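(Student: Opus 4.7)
The plan is to simultaneously diagonalise the $n\times n$ circulant blocks by conjugating with the block-diagonal Fourier matrix $\mathcal{F}=I_p\otimes F$, and then re-index by a commutation (perfect-shuffle) permutation so that the matrices $S_k$ appear as the diagonal blocks of a $p\times p$ block-diagonal matrix. The spectrum of $C$ then reads off as the union of the spectra of the $S_k$.

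In detail: by the diagonalisation (\ref{eigenvalues2}), every block satisfies $C(i,j)=F\,D(i,j)\,F^{\ast}$, where $D(i,j)=\mathrm{diag}(s_1(i,j),\ldots,s_n(i,j))$ and the entries $s_k(i,j)$ are exactly those defined in (\ref{mtcsk}). Consequently $\mathcal{F}^{\ast}C\mathcal{F}$ is a $p\times p$ arrangement of $n\times n$ diagonal blocks whose $(i,j)$ block equals $D(i,j)$. Indexing rows and columns by pairs $(i,k)$ with $i\in\{1,\ldots,p\}$ and $k\in\{1,\ldots,n\}$ in lexicographic order, this matrix has a nonzero $((i,k),(j,\ell))$ entry only when $k=\ell$, and in that case the entry equals $s_k(i,j)$. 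Now let $P$ be the commutation permutation matrix implementing the reindexing $(i,k)\mapsto(k,i)$. The $((k,i),(\ell,j))$ entry of $P^{T}\mathcal{F}^{\ast}C\mathcal{F}P$ is then $s_k(i,j)\delta_{k\ell}$, hence
\[
P^{T}\mathcal{F}^{\ast}C\mathcal{F}P=\mathrm{diag}(S_1,S_2,\ldots,S_n),
\]
with each $S_k$ the $p\times p$ matrix displayed in (\ref{mtcsk}).

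Since $\mathcal{F}$ and $P$ are invertible, similarity preserves the spectrum and
\[
\Sigma(C)=\Sigma\bigl(\mathrm{diag}(S_1,\ldots,S_n)\bigr)=\bigcup_{k=1}^{n}\Sigma(S_k),
\]
which is exactly (\ref{unionofsets}). The main obstacle is really only the bookkeeping in the second step: one has to verify that the simultaneous action of $P^{T}$ on rows and of $P$ on columns collapses the support $\{((i,k),(j,\ell)):k=\ell\}$ of the nonzero entries of $\mathcal{F}^{\ast}C\mathcal{F}$ into a block-diagonal support, which is immediate once $P$ is identified as the commutation matrix exchanging the two tensor factors of $\mathbb{C}^{p}\otimes\mathbb{C}^{n}$. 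The hypothesis that $\mathbf{K}$ is algebraically closed of characteristic zero is used only to guarantee the existence of the primitive $n$-th root of unity $\omega$ appearing in $F$ and hence the validity of the circulant diagonalisation (\ref{eigenvalues2}) over $\mathbf{K}$.
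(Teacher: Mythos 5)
Your argument is correct, but it is not the route taken here: the paper does not prove Theorem \ref{main} at all, it quotes it from \cite{AMRH}, where the result is obtained by invoking Williamson's theorem \cite{Williamson} on block matrices whose blocks commute (circulant blocks are polynomials in the cyclic shift, hence pairwise commuting, and Williamson's theorem lets one replace each block by its eigenvalues to read off the spectrum). Your proof is instead a direct, self-contained block-diagonalization: conjugate by $I_p\otimes F$ to turn every block into $\mathrm{diag}\bigl(s_1(i,j),\ldots,s_n(i,j)\bigr)$, then apply the commutation (perfect-shuffle) permutation to collapse the support $\{((i,k),(j,\ell)):k=\ell\}$ into $\mathrm{diag}(S_1,\ldots,S_n)$; this makes the similarity explicit and even yields the eigenvectors, at the cost of the indexing bookkeeping you describe, whereas the citation route outsources that work to a general lemma about commuting blocks. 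One small repair: over an abstract algebraically closed field $\mathbf{K}$ of characteristic $0$ there is no complex conjugation, so you should write $F^{-1}=\frac{1}{n}\bigl(\omega^{-(k-1)(\ell-1)}\bigr)$ (well defined since $\omega$ exists and $n$ is invertible in characteristic $0$) in place of $F^{\ast}$ and $\mathcal{F}^{\ast}$; with that substitution the similarity $P^{T}\mathcal{F}^{-1}C\mathcal{F}P=\mathrm{diag}(S_1,\ldots,S_n)$ and the conclusion $\Sigma(C)=\bigcup_{k=1}^{n}\Sigma(S_k)$ go through verbatim.
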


\noindent The next result is a direct consequence of (\ref{coefficients}).
\begin{corollary}
\cite{AMRH}
\label{remark1}
Let $S_{\ell}$ be the matrix defined in (\ref{mtcsk}), $ 1 \leq \ell \leq n$. Consider $c(u,v):=(c_1(u,v),\ldots, c_{n}(u,v))^T$ and  $C(u,v)=circ(c(u,v))$. Then, the matrix $C$ in (\ref{mtcsa}) is nonnegative if and only if the matrix

\begin{eqnarray*}
\label{summatrix}
L_k:=\begin{pmatrix}
c_k(1,1) & c_k(1,2)&\ldots& c_k(1,p)\\
\vdots&\vdots&\ddots&\vdots\\
c_k(p,1) & c_k(p,2)&\ldots& c_k(p,p)
\end{pmatrix}
&=&
\frac{1}{n}\sum\limits_{\ell=1}^{n} S _{\ell}\omega ^{-(k-1)(\ell-1)},
\end{eqnarray*}

\noindent $1\leq k\leq n$, where $\omega$ was defined as in $(\ref{omega})$
is nonnegative and, in this case the matrix  $S_{1}$ is nonnegative.

\end{corollary}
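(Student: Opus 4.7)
The plan is to obtain the displayed formula for $L_k$ by promoting the scalar Fourier inversion formula (\ref{coefficients}) entry-wise to the $p\times p$ block array, and then to read off the nonnegativity characterization directly.

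First I would observe that the definition (\ref{mtcsk}) of $s_k(i,j)$ is, up to the relabeling $p\leftrightarrow n$ and $\tau\leftrightarrow\omega$, exactly the expression (\ref{eigenvalues2}) for the $k$-th eigenvalue of the circulant $C(i,j)=circ(c(i,j))$. Hence, for each fixed pair $(i,j)$, the scalars $s_1(i,j),\ldots,s_n(i,j)$ are precisely the eigenvalues of the circulant block $C(i,j)$, listed in the Fourier ordering, and they depend linearly on the data vector $c(i,j)$.

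Next I would apply the inversion formula (\ref{coefficients}), again for each fixed $(i,j)$, to recover the coefficients of $C(i,j)$ from those eigenvalues:
$$
c_k(i,j)=\frac{1}{n}\sum_{\ell=1}^{n} s_\ell(i,j)\,\omega^{-(k-1)(\ell-1)},\qquad 1\le k\le n.
$$
Since the $(i,j)$-entry of $L_k$ is by definition $c_k(i,j)$ and the $(i,j)$-entry of $S_\ell$ is $s_\ell(i,j)$, collecting this identity over all $1\le i,j\le p$ yields exactly the matrix identity in the statement.

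With this identity in hand the equivalence is immediate: the block matrix $C$ in (\ref{mtcsa}) is nonnegative if and only if every scalar entry $c_k(i,j)$ of every circulant block is nonnegative, which is in turn equivalent to every entry of every $L_k$ being nonnegative. The supplementary assertion that $S_1\ge 0$ in this case follows at once from (\ref{mtcsk}) with $k=1$, which gives $s_1(i,j)=\sum_{\ell=1}^{n}c_\ell(i,j)$, a sum of nonnegative reals whenever $C\ge 0$. There is no real obstacle here; the whole content is the Fourier inversion formula (\ref{coefficients}) lifted to matrices, and the only care needed is bookkeeping of indices, since the parameter $p$ of (\ref{coefficients}) plays the role of the within-block size $n$ here and $\tau$ there must be identified with $\omega$.
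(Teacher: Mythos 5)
Your proposal is correct and matches the paper's intent exactly: the paper offers no separate argument, stating only that the corollary is ``a direct consequence of (\ref{coefficients})'', and your entry-wise lift of the Fourier inversion formula to the $p\times p$ block array, followed by the observation that $C\geq 0$ iff every $c_k(i,j)\geq 0$ iff every $L_k\geq 0$, with $S_1\geq 0$ coming from $s_1(i,j)=\sum_{\ell}c_\ell(i,j)$, is precisely that direct consequence spelled out. The only bookkeeping caveat, which you handle correctly, is that (\ref{coefficients}) is stated with $p$ and $\tau$ while here the within-block size is $n$ with root $\omega$ (and the paper's range ``$1\leq k\leq p-1$'' there should be read as running over all $n$ indices).
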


%\begin{remark}
%\label{remark1}
%\cite{AMRH}
%As a consequence of previous result, the matrix $C$ in (\ref{mtcsa}) is nonnegative if and only if the matrix

%\begin{eqnarray}

%L_k:=\frac{1}{n}\sum\limits_{\ell=1}^{n} S _{\ell}\omega ^{-(k-1)(\ell-1)},
%\end{eqnarray}
%\noindent $1\leq k\leq n$, where $\omega$ was defined as in $(\ref{omega})$
%is nonnegative and, in this case the matrix  $S_{1}$ is nonnegative.

%\end{remark}

\subsection{Construction of nonnegative $g$-circulant matrices given its spectrum
}

\noindent The following problem is considered:

\begin{problem}Let $\beta_{1}\in \mathbb{R}$ and $\beta_{2}>0$ real numbers. Let $p$ be an odd prime number and $g$ a cyclic generator of $U(\mathbb{Z}/p\mathbb{Z})$. Let
\begin{equation}
\varphi =\cos \frac{2\pi }{p-1}+i\sin \frac{2\pi }{p-1}.  \label{r1pr}
\end{equation}
Consider the list
\begin{equation}
\Sigma =\left( \beta _{1},\beta _{2},\beta _{2}\varphi ,\beta _{2}\varphi
^{2},\ldots ,\beta _{2}\varphi ^{p-2}\right) .  \label{list1}
\end{equation}
Find a real matrix $g$-circulant matrix $A$ whose set of eigenvalues are the components of  $\Sigma $.
\end{problem}

\noindent The answer to this problem is constructive and it is presented in the following result.

\begin{theorem}
\label{constructivo}
Let $p>2$ be a prime number. Suppose that  $g$ is a cyclic generator of $U(\mathbb{Z}/p\mathbb{Z}).$ Let $\varphi$ be the $p-1$ primitive root of the unit defined as in  (\ref{r1pr}).
Consider the list given in (\ref{list1})\ and let us define the auxiliary list
\begin{equation*}
\Sigma ^{\prime }=\left( \beta _{1},\beta _{2}\tau ,\beta _{2}\tau
^{2},\beta _{2}\tau ^{3},\ldots ,\beta _{2}\tau ^{p-1}\right)
\end{equation*}
where $\tau$ is defined in (\ref{omega_root}). Then if $C$ is the real circulant matrix whose eigenvalues are the components of $\Sigma ^{\prime }$ then $A=Q_{g}C$ is a real $g$-circulant matrix whose eigenvalues can be ordered as the components of $\Sigma $ in (\ref{list1}).
\end{theorem}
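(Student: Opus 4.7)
The plan is threefold: first, construct a real circulant matrix $C$ realising $\Sigma'$; second, set $A=Q_{g}C$ and invoke Proposition \ref{qc} to obtain a real $g$-circulant matrix; third, compute $\Sigma(A)$ directly through the factorisation $A=FQ_{g^{-1}}DF^{\ast}$ provided by Proposition \ref{espPD}. The main technical point will be identifying the cycle structure of $Q_{g^{-1}}$ and evaluating a certain product of roots of unity.

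For the first step, I would check that $\Sigma'$ fulfils the conjugate-symmetry condition of Remark \ref{conju}, which, since $p$ is odd, demands $\lambda_{p+1-k}=\overline{\lambda}_{k+1}$ for $1\le k\le (p-1)/2$. Writing $\lambda_{\ell}=\beta_{2}\tau^{\ell-1}$ for $\ell\ge 2$ and using $\overline{\tau^{k}}=\tau^{-k}=\tau^{p-k}$ makes this immediate; the assumption $\lambda_{1}=\beta_{1}\in\mathbb{R}$ handles the remaining requirement. Formula (\ref{coefficients}) then produces a real first-row vector $\mathbf{c}$, so the matrix $C=\mathrm{circ}(\mathbf{c})=FDF^{\ast}$ with $D=\mathrm{diag}(\Sigma')$ is, by construction, a real circulant matrix whose spectrum is $\Sigma'$.

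For the second step, define $A=Q_{g}C$. Proposition \ref{qc} ensures $A$ is $g$-circulant, and since both $Q_{g}$ and $C$ are real, so is $A$. By Proposition \ref{espPD} we have $A=FQ_{g^{-1}}DF^{\ast}$, hence $\Sigma(A)=\Sigma(Q_{g^{-1}}D)$. Because $g$ is a cyclic generator of $U(\mathbb{Z}/p\mathbb{Z})$, so is $g^{-1}$; hence by Corollary \ref{equivallente} the permutation associated to $Q_{g^{-1}}$ decomposes as a fixed point at index $1$ together with a single cycle of length $p-1$ on the indices $\{2,\dots,p\}$. This is exactly the cycle-structure input required by Remark \ref{primitive}.

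Finally, I would apply Remark \ref{primitive} to $Q_{g^{-1}}D$. The fixed point produces the eigenvalue $\lambda_{1}=\beta_{1}$; the long cycle contributes the $(p-1)$-th roots of
$\prod_{\ell=2}^{p}\lambda_{\ell}=\beta_{2}^{p-1}\,\tau^{1+2+\cdots+(p-1)}=\beta_{2}^{p-1}\,\tau^{p(p-1)/2}=\beta_{2}^{p-1}$,
the last equality holding because $\tau^{p}=1$ and $(p-1)/2\in\mathbb{Z}$. Since $\beta_{2}>0$, these $(p-1)$-th roots are precisely $\beta_{2}\varphi^{j}$ for $0\le j\le p-2$, so altogether the spectrum of $A$ is $\{\beta_{1},\beta_{2},\beta_{2}\varphi,\dots,\beta_{2}\varphi^{p-2}\}=\Sigma$. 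The pivotal identity $\tau^{p(p-1)/2}=1$, which crucially uses that $p$ is odd, is what converts the geometric progression $\beta_{2}\tau^{\ell-1}$ of moduli across $\Sigma'$ into the common modulus $\beta_{2}$ across $\Sigma$, and this is where I expect any subtlety to lie.
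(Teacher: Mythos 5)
Your argument is correct and follows essentially the paper's route: the paper also builds the real circulant $C$ from the conjugate-paired auxiliary list $\Sigma'$, sets $A=Q_{g}C$, and reads the spectrum off the primary-cycle structure of $Q_{g^{-1}}$, except that it does so by citing Theorem \ref{eigenvalues}, whose proof is precisely your combination of Proposition \ref{espPD}, Remark \ref{primitive} and Corollary \ref{equivallente}. The only minor difference is that the paper gets the product of the last $p-1$ eigenvalues equal to $\beta_{2}^{p-1}$ via the moduli of conjugate pairs (as in (\ref{mancha})), whereas you compute $\tau^{p(p-1)/2}=1$ directly using that $p$ is odd; both give the same conclusion.
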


\bigskip

\begin{proof}
Firstly observe that if
\begin{equation}
\left( \lambda _{1},\lambda _{2},\ldots ,\lambda _{p}\right) :=\left( \beta
_{1},\beta _{2}\tau ,\beta _{2}\tau ^{2},\beta _{2}\tau ^{3},\ldots
,\beta _{2}\tau ^{p-1}\right) ,  \label{parconj}
\end{equation}
then
\begin{equation}\label{mancha}
\left\vert \lambda _{1}\right\vert \left\vert \lambda _{2}\right\vert \ldots
\left\vert \lambda _{p}\right\vert =\beta _{1}\beta _{2}^{p-1}\Rightarrow
\left\vert \lambda _{2}\right\vert \ldots \left\vert \lambda _{p}\right\vert
=\beta _{2}^{p-1}
\end{equation}
$\Rightarrow $%
\begin{equation*}
\left( \left\vert \lambda _{2}\right\vert \ldots \left\vert \lambda
_{p}\right\vert \right) ^{\frac{1}{p-1}}=\beta _{2}.
\end{equation*}
\noindent Let
\begin{equation*}
C=circ\left( a_{1},a_{2},\ldots ,a_{p}\right) \ \text{and} \ A=Q_{g}C.
\end{equation*}%

\noindent Then by Theorem \ref{eigenvalues} and (\ref{mancha}), the list of eigenvalues of $A$ is the one given in (\ref{list1}
). Note that the list that is used to define the circulant matrix $C$ in (\ref{parconj}) is a pair conjugate list as in \cite{rojo-soto} and therefore the matrices $C$ and $A$ are real matrices.
\end{proof}

%\begin{remark}
%Note that the list that is used to define the circulant matrix $C$ in (\ref{parconj}) is a pair conjugate list as in \cite{rojo-soto} and therefore the matrices %$C$ and $A$ are real matrices. It remains to see how to construct nonnegative $g$-circulant matrices.
%\end{remark}

\begin{theorem}
\label{suficient}
Let $p$ be a prime integer and consider $g$ a cyclic generator of $U(\mathbb{Z}/p\mathbb{Z}).$  %Consider the list in (\ref{list1}). Let $\left( \lambda _{1},\lambda
%_{2},\ldots ,\lambda _{n}\right) $ be the vector defined in (\ref{parconj}).
If
\begin{equation}
\label{condi4}
\beta _{1}\geq \beta _{2}\geq 0,
\end{equation}
then there exists a nonnegative $g$-circulant matrix $A$ of order $p$ such that its eigenvalues are those from the list presented in (\ref{list1}).
\end{theorem}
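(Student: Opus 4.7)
The plan is to build on Theorem~\ref{constructivo}, which already produces a real $g$-circulant matrix $A = Q_g C$ realizing the list $\Sigma$ in (\ref{list1}), with $C$ the real circulant matrix whose eigenvalues are the components of $\Sigma'=(\beta_1,\beta_2\tau,\beta_2\tau^2,\ldots,\beta_2\tau^{p-1})$. The only thing left to establish is the nonnegativity of this $A$. Since $Q_g$ is a permutation matrix, multiplying $C$ on the left by $Q_g$ only rearranges the rows of $C$, so $A\ge 0$ if and only if $C\ge 0$, which in turn (as $C$ is circulant) is equivalent to the nonnegativity of every entry in the first row of $C$.

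The key step is therefore to compute the first row of $C$ using the inversion formula (\ref{coefficients}):
\begin{equation*}
c_k=\frac{1}{p}\sum_{\ell=1}^{p}\lambda_\ell\,\tau^{-(k-1)(\ell-1)},\qquad 1\le k\le p,
\end{equation*}
where $\lambda_1=\beta_1$ and $\lambda_\ell=\beta_2\tau^{\ell-1}$ for $2\le\ell\le p$. Separating the $\ell=1$ term and reindexing, I would rewrite
\begin{equation*}
c_k=\frac{1}{p}\Bigl[\beta_1+\beta_2\sum_{j=1}^{p-1}\tau^{j(2-k)}\Bigr].
\end{equation*}
Then I apply the standard root-of-unity identity $\sum_{j=0}^{p-1}\tau^{jm}=p$ if $p\mid m$ and $0$ otherwise, which gives $\sum_{j=1}^{p-1}\tau^{j(2-k)}=p-1$ when $k=2$ and $-1$ in all other cases. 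Hence
\begin{equation*}
c_2=\frac{\beta_1+(p-1)\beta_2}{p},\qquad c_k=\frac{\beta_1-\beta_2}{p}\ \text{for}\ k\ne 2.
\end{equation*}

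The hypothesis $\beta_1\ge\beta_2\ge 0$ now immediately yields $c_k\ge 0$ for every $k$: the $k=2$ entry is a nonnegative combination of nonnegative scalars, and the remaining entries are nonnegative precisely by $\beta_1\ge\beta_2$. Therefore $C\ge 0$, hence $A=Q_gC\ge 0$, completing the proof.

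I do not foresee a real obstacle here: once one has Theorem~\ref{constructivo}, the problem reduces to a one-line invocation of the inverse discrete Fourier transform and a clean evaluation of a geometric sum of $p$-th roots of unity. The only care to take is with the indexing, so that the role of $\lambda_1$ (the free real eigenvalue $\beta_1$) is clearly separated from the ``rotating'' eigenvalues $\beta_2\tau^{\ell-1}$ before the cancellation identity is applied.
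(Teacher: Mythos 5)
Your proposal is correct and follows essentially the same route as the paper: both invoke the construction of Theorem~\ref{constructivo} and then use the inversion formula (\ref{coefficients}) together with the root-of-unity cancellation to find $c_2=\frac{\beta_1+(p-1)\beta_2}{p}$ and $c_k=\frac{\beta_1-\beta_2}{p}$ for $k\neq 2$, so that $\beta_1\geq\beta_2\geq 0$ gives $C\geq 0$ and hence $A=Q_gC\geq 0$. Your write-up is in fact slightly more explicit than the paper's (you spell out the geometric-sum evaluation and the observation that left multiplication by the permutation matrix $Q_g$ preserves nonnegativity), but the argument is the same.
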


\begin{proof}
According to the previous proof and using the expressions in (\ref{coefficients}), the entries of the first row of the circulant matrix $C$
are
\begin{equation*}
c_{j}=\frac{1}{p}\left( \beta _{1}+\sum_{u=2}^{p}\beta _{2}\tau
^{\left( u-1\right) }\overline{\tau }^{\left( u-1\right) \left( j-1\right)
}\right), \quad  1\leq j\leq p, \ j\neq 2.
\end{equation*}
Since, for $j\neq 2, \sum_{u=2}^{p}\beta _{2}\tau
^{\left( u-1\right)}\overline{\tau }^{\left( u-1\right) \left( j-1\right)
}=-\beta_2$, the first part of the result follows. For the second part of the result we need note that $$pc_2=\beta_1+(p-1)\beta_2.$$ Therefore, the condition in (\ref{condi4}) holds for the matrix $C$ and then $A$ is non-negative. In consequence, $A$ is a  $g$-circulant nonnegative matrix with spectrum as in (\ref{list1}).
\end{proof}
\begin{remark}
By considering $\beta_1=\beta_2$ in Theorem \ref{condi4} the mentioned $g$-circulant nonnegative matrix $A$ becomes $g-circ(0,\frac{\beta_1+(p-1)\beta_2}{p},0,\ldots,0).$
\end{remark}
\begin{example}\
Consider the list with $7$ elements, $\left( 6,5,5\varphi ,5\varphi ^{2},5\varphi ^{3},5%
\overline{\varphi }^{2},5\overline{\varphi }\right) ,$ where
\begin{equation*}
\varphi =\cos \frac{2\pi }{6}+i\sin \frac{2\pi }{6}=\cos \frac{\pi }{3}%
+i\sin \frac{\pi }{3}.
\end{equation*} In this example it is determined a nonnegative, $3$-circulant matrix $A$ whose spectrum is the given list. In fact, consider the list $$\left( 6,5\tau
,5\tau ^{2},5\tau ^{3},5\overline{\tau }^{3},5\overline{\tau }^{2},5%
\overline{\tau }\right) $$ as the list of eigenvalues of a circulant matrix
$C$. With $\tau=\cos{\frac{2\pi}{7}}+i\sin{\frac{2\pi}{7}}$, and using MATLAB we have
\begin{equation*}
C=circ\left( 0.1429,5.1429,0.1429,0.1429,0.1429,0.1429,0.1429\right) .
\end{equation*}

\noindent Let $a=0.1429$, and $b=5.1429$.\ Again, using MATLAB we obtain the $3$-circulant matrix
\begin{equation*}
A=%
\begin{pmatrix}
a & b & a & a & a & a & a \\
a & a & a & a & b & a & a \\
b & a & a & a & a & a & a \\
a & a & a & b & a & a & a \\
a & a & a & a & a & a & b \\
a & a & b & a & a & a & a \\
a & a & a & a & a & b & a%
\end{pmatrix}%
\end{equation*}%
whose eigenvalues are
\begin{equation*}
\Sigma \left( A\right) =\left\{ 6,\pm 5,2.5\pm 4.33i,-2.5\pm 4.33i\right\} .
\end{equation*}
Note that only the greatest eigenvalue doesn't have modulus $5$ and $-5$
corresponds to the element $5\varphi ^{3}$ of the given list.
\end{example}

\subsection{$g$-ciculant matrices partitioned by blocks}

\begin{definition}
\cite{AMRH} A partitioned matrix into blocks is called \textit{permutative by blocks} when its row blocks are permutations of the first row block.
\end{definition}

\begin{theorem}
\cite{AMRH}
Let $C$ be the matrix partitioned into blocks defined in (\ref{mtcsa}). For $1\leq k \leq n,$ let $S_k$ be the class of matrices connected to $C$ as defined in (\ref{mtcsk}). The matrix $C$
is permutative by blocks if and only if the matrices $S_k$ are pairwise permutatively equivalent.
\end{theorem}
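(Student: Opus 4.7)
The plan is to reduce both directions to a single observation: the map sending a circulant block $C(i,j)=circ(\mathbf{c}(i,j))$ to the $n$-tuple of scalars $(s_1(i,j),s_2(i,j),\ldots,s_n(i,j))$ is a bijection. Indeed, formula (\ref{mtcsk}) expresses $(s_k(i,j))_k$ as the Discrete Fourier Transform of $(c_\ell(i,j))_\ell$, and this is invertible by the inversion formula of Corollary \ref{remark1} (equivalently, (\ref{coefficients}) applied coordinatewise). Thus two blocks coincide if and only if their associated $n$-tuples coincide coordinatewise.

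For the forward direction, assume $C$ is permutative by blocks. By definition there exist permutations $\sigma_1=\mathrm{id},\sigma_2,\ldots,\sigma_p\in S_p$ such that the $i$-th row block of $C$ is the $\sigma_i$-permutation of the first row block, i.e.\ $C(i,j)=C(1,\sigma_i(j))$ for all $1\le i,j\le p$. In particular $c_\ell(i,j)=c_\ell(1,\sigma_i(j))$ for every $\ell$, and substituting into (\ref{mtcsk}) yields $s_k(i,j)=s_k(1,\sigma_i(j))$ for every $k$. With the common tuple $\boldsymbol{\gamma}=(\sigma_1,\ldots,\sigma_p)$ (and $\sigma_1=\mathrm{id}$), Definition \ref{ept} shows that each $S_k$ is $\boldsymbol{\gamma}$-permutative with first row $(s_k(1,1),\ldots,s_k(1,p))$. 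Since $\boldsymbol{\gamma}$ is the same for every $k$, the matrices $S_1,\ldots,S_n$ are pairwise permutatively equivalent.

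For the converse, suppose $S_1,\ldots,S_n$ are pairwise permutatively equivalent, with common tuple $\boldsymbol{\gamma}=(\sigma_1,\ldots,\sigma_p)$. Then for every $k$ and every $i,j$ we have $s_k(i,j)=s_k(1,\sigma_i(j))$. Fix $i,j$ and consider the $n$-tuples $(s_k(i,j))_{k=1}^n$ and $(s_k(1,\sigma_i(j)))_{k=1}^n$: they are equal, so by Fourier inversion (Corollary \ref{remark1}) their preimages $\mathbf{c}(i,j)$ and $\mathbf{c}(1,\sigma_i(j))$ also agree coordinatewise. Hence $C(i,j)=C(1,\sigma_i(j))$ for all $i,j$, which is precisely the statement that $C$ is permutative by blocks, with row-block permutations $\sigma_1,\ldots,\sigma_p$.

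I expect no serious obstacle: the only delicate point is bookkeeping the indexing conventions so that "row $i$ is $\gamma_i$ applied to row $1$" in Definition \ref{ept} matches the block-permutation convention used in the definition of permutative-by-blocks, and then invoking Fourier inversion in the converse direction. Once both directions use the \emph{same} tuple $\boldsymbol{\gamma}=(\sigma_1,\ldots,\sigma_p)$, the equivalence is immediate.
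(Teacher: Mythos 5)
This theorem is not proved in the paper at all --- it is quoted from \cite{AMRH} --- so there is no in-paper proof to compare against; judged on its own terms, your strategy is the natural (and surely the intended) one: the entrywise DFT bijection between the block data $\mathbf{c}(i,j)$ and the tuples $\left(s_1(i,j),\ldots,s_n(i,j)\right)$, through which a block-permutation structure on $C$ transfers to a common structure on the $S_k$ and back. Your forward direction is complete and correct: from $C(i,j)=C(1,\sigma_i(j))$ you get $s_k(i,j)=s_k(1,\sigma_i(j))$ for every $k$, so all $S_k$ are $\mathbf{\gamma}$-permutative for the single tuple $\mathbf{\gamma}=(\sigma_1,\ldots,\sigma_p)$.

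The converse, however, has one genuine gap: you pass from ``$S_1,\ldots,S_n$ are pairwise permutatively equivalent'' to ``with common tuple $\mathbf{\gamma}=(\sigma_1,\ldots,\sigma_p)$''. Pairwise equivalence only provides, for each pair $(k,k')$, some tuple working for that pair; when rows contain repeated entries a matrix is $\mathbf{\gamma}$-permutative for many tuples $\mathbf{\gamma}$, and the pairwise witnesses need not admit a common choice. What is actually equivalent to $C$ being permutative by blocks is the existence of one tuple $\mathbf{\gamma}$ for which \emph{every} $S_k$ is $\mathbf{\gamma}$-permutative, and this is strictly stronger than literal pairwise equivalence. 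For instance, with $p=4$ and $n=3$, let $S_1$ have all four rows equal to $(1,1,2,2)$, let $S_2$ have all rows equal to $(1,2,1,2)$, and let $S_3$ have rows $(1,2,2,1),(2,1,1,2),(1,2,2,1),(1,2,2,1)$. The permutations carrying row $1$ to row $2$ form, respectively, the sets $\{e,(12),(34),(12)(34)\}$, $\{e,(13),(24),(13)(24)\}$ and $\{(12)(34),(13)(24),(1243),(1342)\}$; these intersect pairwise (via $e$, $(12)(34)$ and $(13)(24)$), so the three matrices are pairwise permutatively equivalent, yet the triple intersection is empty, so no common $\mathbf{\gamma}$ exists, and the block matrix $C$ recovered from $S_1,S_2,S_3$ by inverse DFT is not permutative by blocks. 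So you must either read the hypothesis as ``all $S_k$ are $\mathbf{\gamma}$-permutative with respect to one common $\mathbf{\gamma}$'' (which is what \cite{AMRH} intends, and under which your argument goes through verbatim) or accept that the converse, as literally stated with ``pairwise'', cannot be proved; in any case the upgrade you made silently is exactly the point that needs to be made explicit.
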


\begin{definition}
We say that the matrix partitioned into blocks  $A=\left( A_{ij}\right) _{1\leq i,j\leq p}$
where each block $A_{ij}$ is $n\times n$ and $p$ is an odd prime number, is a $g$-circulant matrix by blocks if
\begin{equation*}
A_{ij}=A_{i+1},_{j+g}, \quad 1\leq i,j\leq p,
\end{equation*}
where the subscripts are all considered congruent $(\rm{mod}\, p)$. In this case, all the blocks of $A$ depend only from its first row block. Considering its first row blocks as
$A_{1},A_{2},\ldots ,A_{p}$
we have
\begin{equation*}
A=%
\begin{pmatrix}
A_{1} & A_{2} & A_{3} &  & \ldots & A_{p} \\
A_{p-g+1} & A_{p-g+2} & A_{p-g+3} &  & \ldots & A_{p-g} \\
A_{p-2g+1} & A_{p-2g+2} & A_{p-2g+3} &  & \ldots & A_{p-2g} \\
\vdots & \vdots & \vdots & \ddots &  & \vdots \\
&  &  &  &  &  \\
A_{g+1} & A_{g+2} & A_{g+3} &  & \ldots & A_{g}%
\end{pmatrix}%
.
\end{equation*}%
We will denote $A$ as
\begin{equation*}
A=g-circ\left( A_{1},A_{2},A_{3},\ldots ,A_{p}\right) .
\end{equation*}%
Suppose that the first row of blocks of $A$ are
$p$ circulant matrices of order $n$ and let
\begin{equation*}
A_{k}=circ\left( a_{1}\left( k\right) ,a_{2}\left( k\right) ,\ldots
,a_{n}\left( k\right) \right),\quad  1\leq k\leq p.
\end{equation*}
By Theorem \ref{main} it was shown that

\begin{equation*}
\Sigma \left( A\right) =\bigcup_{k=1}^{n}\Sigma \left( S_{k}\right),
\end{equation*}%
where (see \cite{robbiano})
\begin{equation*}
S_{k}=g-circ\left( \lambda _{k}\left( A_{1}\right) ,\lambda _{k}\left(
A_{2}\right) ,\ldots ,\lambda _{k}\left( A_{p}\right) \right), \quad
1\leq k\leq n,
\end{equation*}

\noindent with $$\lambda_k(A_i)=\sum_{j=1}^{n}a_{j}(k)\omega^{(k-1)(j-1)}.$$
\end{definition}

%Let $p$ and $g$ be as in Theorem \ref{eigenvalues}. Let $$\varphi=\cos{\frac{2\pi}{p-1}}+i\sin{\frac{2\pi}{p-1}}.$$
\noindent We will give an answer to the following problem.
\begin{problem}
Let $p$ be a prime integer and consider $g$ a cyclic generator of $U(\mathbb{Z}/p\mathbb{Z})$. Let $\varphi$ as in (\ref{r1pr}). Given the set
\begin{equation}
\label{dado1}
\mathcal{B}=\bigcup_{k=1}^{n}\left\{ \beta _{1k},\beta _{2k},\beta _{2k}\varphi ,\beta
_{2k}\varphi ,\ldots ,\beta _{2k}^{{}}\varphi ^{p-2}\right\}
\end{equation}

\noindent where $\forall 1\leq k\leq n, \ \beta _{1k}\in \mathbb{R}$ and $\beta_{2k} \geq 0$ with $\beta _{11}\geq \beta _{21}\geq 0,$ that is the union of the $n$ spectra of the real $g$-circulant matrices $S_k$ of order $p$, being $S_1$ nonnegative, find the real $g$-circulant matrix by blocks $A$ such that

\begin{equation*}
\Sigma \left( A\right) =\mathcal{B}.
\end{equation*}
\end{problem}

\noindent The following result will present sufficient conditions to obtain nonnegative $g-$circulant by blocks matrices having a set like $\mathcal{B}$ as its spectrum.
\begin{theorem}
\label{bloqueconstructivo}
Let $p$ be a prime integer and consider $g$ being a cyclic generator of $U(\mathbb{Z}/p\mathbb{Z})$. For $1\leq k \leq n$, consider $$\mathcal{B}_k= \{\beta_{1k}, \beta_{2k}, \beta_{2k}\varphi,\ldots \beta_{2k}\varphi^{p-2}\}$$  where $\forall 1\leq k\leq n, \beta _{1k}\in \mathbb{R}$ and $\beta_{2k} \geq 0$ with $\beta _{11}\geq \beta _{21}\geq 0$. Let $\mathcal{B}=\cup\mathcal{B}_k$. Let $S_1,S_2,\ldots,S_n$ be $g$-circulant real matrices of order $p$, such that $\Sigma(S_k)=\mathcal{B}_k ,\ \forall{1\leq k \leq n}.$ Moreover, suppose that
\begin{eqnarray}
\label{mpc}
\mathcal{B}_{n+2-k}=\overline{\mathcal{B}}_{k}, \quad 2\leq k \leq n
\end{eqnarray}
and that $\mathcal{B}_{1}$ verify the conditions of Theorem
 \ref{suficient}, and that

\begin{eqnarray}
\label{bloquesuficiente}
L_k=\frac{1}{n}\sum\limits_{\ell=1}^{n} S _{\ell}\omega ^{-(k-1)(\ell-1)}\geq 0, \quad \forall{1\leq k \leq n}.
\end{eqnarray}

\noindent Then $\mathcal{B}$ is the spectrum of a nonnegative $g$-circulant matrix by blocks.
\end{theorem}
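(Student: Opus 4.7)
The plan is to produce the block $g$-circulant matrix $A=g-circ(A_1,\ldots,A_p)$, whose first-row blocks $A_i$ of order $n$ are circulant, by Fourier inversion from auxiliary $g$-circulant matrices $S_k$ of order $p$ with $\Sigma(S_k)=\mathcal{B}_k$. Once these are in hand, the block spectral formula $\Sigma(A)=\bigcup_{k=1}^{n}\Sigma(S_k)$ recalled in the preceding subsection will immediately give $\Sigma(A)=\mathcal{B}$, and the inversion identity of Corollary \ref{remark1} will translate hypothesis $(\ref{bloquesuficiente})$ into the statement $A\ge 0$.

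First I would manufacture the matrices $S_k$. For $k=1$ the hypothesis on $\mathcal{B}_1$ lets Theorem \ref{suficient} produce a nonnegative real $g$-circulant $S_1$ of order $p$ with $\Sigma(S_1)=\mathcal{B}_1$. For $2\le k\le n$, Theorem \ref{constructivo} yields a real $g$-circulant $S_k$ with $\Sigma(S_k)=\mathcal{B}_k$. Both constructions are canonical---they factor through the real circulant whose eigenvalue list is ordered as in $(\ref{parconj})$---so two lists that coincide after complex conjugation produce identical matrices; combined with hypothesis $(\ref{mpc})$ this forces the matrix equality $S_{n+2-k}=S_{k}$ for $2\le k\le n$. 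Setting $\lambda_k(A_i):=(S_k)_{1,i}$ I then define
\begin{equation*}
A_i=circ\bigl(a_1(i),\ldots,a_n(i)\bigr),\qquad a_j(i)=\frac{1}{n}\sum_{k=1}^{n}(S_k)_{1,i}\,\omega^{-(k-1)(j-1)},
\end{equation*}
and extend $A_1,\ldots,A_p$ to all block positions via the $g$-circulant rule to assemble $A$. By Remark \ref{conju} together with the equalities $S_{n+2-k}=S_k$, each sequence $((S_k)_{1,i})_{k=1}^{n}$ is conjugate-symmetric, so the inverse DFT returns real values and each $A_i$ is a real circulant matrix.

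For nonnegativity, Corollary \ref{remark1} identifies the matrix $L_k$ of $(\ref{bloquesuficiente})$ as the $p\times p$ array whose $(i,j)$-entry is the $k$-th circulant coefficient of the $(i,j)$-block of $A$; therefore $L_k\ge 0$ for every $k$ is equivalent to all $a_k(i)$ being nonnegative, i.e., to $A\ge 0$. The block spectral formula then yields $\Sigma(A)=\bigcup_{k=1}^{n}\Sigma(S_k)=\mathcal{B}$. I expect the main obstacle to be the reality step: the inverse block DFT returns real entries only when the $S_k$'s obey $S_{n+2-k}=S_{k}$, a relation not implied by each $S_k$ being individually real; the role of hypothesis $(\ref{mpc})$, together with the canonical choice of $S_k$ in Theorems \ref{suficient} and \ref{constructivo}, is precisely to enforce this matrix identity, after which the remaining steps follow from routine manipulations of the block Fourier transform.
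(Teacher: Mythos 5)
Your proposal is correct and follows essentially the same route as the paper: build the $S_k$ via Theorems \ref{suficient} and \ref{constructivo}, get $\Sigma(A)=\bigcup_k\Sigma(S_k)=\mathcal{B}$ from Theorem \ref{main}, use hypothesis (\ref{mpc}) to force $S_{n+2-k}=\overline{S}_k$ so the inverse DFT yields real circulant blocks, and invoke Corollary \ref{remark1} to translate (\ref{bloquesuficiente}) into $A\geq 0$. Your treatment of the reality step is, if anything, slightly more explicit than the paper's, but the argument is the same.
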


\begin{proof}
Using Theorem \ref{constructivo}, with the given sets $\mathcal{B}_{k},$ $g$-circulant matrices of orders $p$ can be constructed. Let $S_{k},1\leq k\leq n$ be those matrices. Following the Theorem \ref{main}, the referred sets
are the spectra of the $g$-circulant matrices $S_k\ \forall{1\leq k\leq n}.$
Then, matrices $L_k$ as in (\ref{summatrix}) can be constructed. As the $g$-circulant matrices $S_k$ are permutatively equivalent, in fact they are $g$-circulant, therefore the matrix $L_k$ is also $g$-circulant. It should also be noted that from Remark \ref{remark1} the matrices $L_k$ must be real and therefore the equality
 $$S_{n+2-k}=\overline{S}_{k},  \quad 2\leq k \leq n,$$ must hold. In fact, it comes from the equality in (\ref{mpc}). Moreover, $S_1$ is  nonnegative as the spectrum $\mathcal{B}_{1}$ verifies the condition of Theorem \ref{suficient} that is valid by hypothesis. In the general case, in order that the $g$-circulant matrix by blocks becomes nonnegative, all the matrices  $L_k$ in  (\ref{summatrix}) must be nonnegative. Therefore we obtain the condition  (\ref{bloquesuficiente}).
\end{proof}
\begin{remark}
Taking into account Theorem \ref{suficient}, $\ \forall 1\leq k\leq n$
\[
S_{k}=g-circ\left( \beta _{1k}-\beta _{2k},\beta _{1k}+\left( p-1\right)
\beta _{2k},\beta _{1k}-\beta _{2k},\ldots ,\beta _{1k}-\beta _{2k}\right),
\]
denoting
\[
L_{k}=g-circ\left( h_{1k},h_{2k},\ldots ,h_{pk}\right),
\]%
and using the identity in (\ref{bloquesuficiente}) we obtain
\[\begin{pmatrix}
h_{1k},
h_{2k},
\ldots,
h_{pk}\end{pmatrix}^{T}= \frac{1}{n}M \begin{pmatrix}
1,
\omega ^{-\left( k-1\right) },
\ldots,
\omega ^{-\left( n-1\right) \left( k-1\right) }%
\end{pmatrix}^{T},
\]
where
\begin{align*}
M=
\begin{pmatrix}
\beta _{11}-\beta _{21} & \beta _{12}-\beta _{22} & \beta _{13}-\beta _{23}
& \cdots  &  & \beta _{1n}-\beta _{2n} \\
\beta _{11}+\left( p-1\right) \beta _{21} & \beta _{12}+\left( p-1\right)
\beta _{22} & \cdots  &  &  & \beta _{1n}+\left( p-1\right) \beta _{2n} \\
\beta _{11}-\beta _{21} & \beta _{12}-\beta _{22} & \beta _{13}-\beta _{23}
& \cdots  &  & \beta _{1n}-\beta _{2n} \\
\vdots  & \vdots  & \vdots & \ddots &  & \vdots  \\
\beta _{11}-\beta _{21} & \beta _{12}-\beta _{22} & \beta _{13}-\beta _{23}
& \cdots  &  & \beta _{1n}-\beta _{2n}%
\end{pmatrix}.
\end{align*}
%\noindent Let denote
%\[
%M=%
%\begin{pmatrix}
%\beta _{11}-\beta _{21} & \beta _{12}-\beta _{22} & \beta _{13}-\beta %_{23}
%& \cdots  &  & \beta _{1n}-\beta _{2n} \\
%\beta _{11}+\left( p-1\right) \beta _{21} & \beta _{12}+\left( p-1\right)
%\beta _{22} & \cdots  &  &  & \beta _{1n}+\left( p-1\right) \beta _{2n} %\\
%\beta _{11}-\beta _{21} & \beta _{12}-\beta _{22} & \beta _{13}-\beta %_{23}
%& \cdots  &  & \beta _{1n}-\beta _{2n} \\
%\vdots  & \vdots  &  &  &  & \vdots  \\
%\beta _{11}-\beta _{21} & \beta _{12}-\beta _{22} & \beta _{13}-\beta %_{23}
%& \cdots  &  & \beta _{1n}-\beta _{2n}%
%\end{pmatrix}.%
%\]%
In consequence, the condition
\[
G=\frac{1}{\sqrt{n}}MF^{\ast }\geq 0
\]%
is an equivalent condition to (\ref{bloquesuficiente}).

\end{remark}

\end{document}